\long\def\greybox#1{%
    \newbox\contentbox%
    \newbox\bkgdbox%
    \setbox\contentbox\hbox to \hsize{%
        \vtop{
            \kern\columnsep
            \hbox to \hsize{%
                \kern\columnsep%
                \advance\hsize by -2\columnsep%
                \setlength{\textwidth}{\hsize}%
                \vbox{
                    \parskip=\baselineskip
                    \parindent=0bp
                    #1
                }%
                \kern\columnsep%
            }%
            \kern\columnsep%
        }%
    }%
    \setbox\bkgdbox\vbox{
        \pdfliteral{0.85 0.85 0.85 rg}
        \hrule width  \wd\contentbox %
               height \ht\contentbox %
               depth  \dp\contentbox
        \pdfliteral{0 0 0 rg}
    }%
    \wd\bkgdbox=0bp%
    \vbox{\hbox to \hsize{\box\bkgdbox\box\contentbox}}%
    \vskip\baselineskip%
}
\title{Computably discrete represented spaces}
\author{
Eike Neumann
\institute{Department of Computer Science\\Swansea University, Swansea, UK\\}
\email{e.f.neumann@swansea.ac.uk}
\and
Arno Pauly
\institute{Department of Computer Science\\Swansea University, Swansea, UK\\}
\email{Arno.M.Pauly@gmail.com}
\and
Cécilia Pradic
\institute{Department of Computer Science\\Swansea University, Swansea, UK\\}
\email{c.pradic@swansea.ac.uk}
\and
Manlio Valenti
\institute{Department of Computer Science\\Swansea University, Swansea, UK\\}
\email{manliovalenti@gmail.com}
}
\begin{document}
\theoremstyle{definition}
\newtheorem{theorem}{Theorem}
\newtheorem{definition}[theorem]{Definition}
\newtheorem{problem}[theorem]{Problem}
\newtheorem{assumption}[theorem]{Assumption}
\newtheorem{corollary}[theorem]{Corollary}
\newtheorem{proposition}[theorem]{Proposition}
\newtheorem{lemma}[theorem]{Lemma}
\newtheorem{observation}[theorem]{Observation}
\newtheorem{fact}[theorem]{Fact}
\newtheorem{question}[theorem]{Open Question}
\newtheorem{conjecture}[theorem]{Conjecture}
\newtheorem{example}[theorem]{Example}
\newtheorem{remark}[theorem]{Remark}
\newcommand{\dom}{\operatorname{dom}}
\newcommand{\id}{\textnormal{id}}
\newcommand{\Cantor}{{\{0, 1\}^\mathbb{N}}}
\newcommand{\baire}{{\mathbb{N}^{<\mathbb{N}}}}
\newcommand{\Baire}{{\mathbb{N}^\mathbb{N}}}
\newcommand{\Lev}{\textnormal{Lev}}
\newcommand{\hide}[1]{}
\newcommand{\mto}{\rightrightarrows}
\newcommand{\uint}{{[0, 1]}}
\newcommand{\bft}{\mathrm{BFT}}
\newcommand{\lbft}{\textnormal{Linear-}\mathrm{BFT}}
\newcommand{\pbft}{\textnormal{Poly-}\mathrm{BFT}}
\newcommand{\sbft}{\textnormal{Smooth-}\mathrm{BFT}}
\newcommand{\ivt}{\mathrm{IVT}}
\newcommand{\cc}{\textrm{CC}}
\newcommand{\lpo}{\textrm{LPO}}
\newcommand{\llpo}{\textrm{LLPO}}
\newcommand{\aou}{AoU}
\newcommand{\Ctwo}{C_{\{0, 1\}}}
\newcommand{\name}[1]{\textsc{#1}}
\newcommand{\C}{\textrm{C}}
\newcommand{\CC}{\textrm{CC}}
\newcommand{\UC}{\textrm{UC}}
\newcommand{\ic}[1]{\textrm{C}_{\sharp #1}}
\newcommand{\xc}[1]{\textrm{XC}_{#1}}
\newcommand{\me}{\name{P}.~}
\newcommand{\etal}{et al.~}
\newcommand{\eval}{\operatorname{eval}}
\newcommand{\rank}{\operatorname{rank}}
\newcommand{\Sierp}{Sierpi\'nski }
\newcommand{\isempty}{\operatorname{IsEmpty}}
\newcommand{\spec}{\textrm{Spec}}
\newcommand{\cord}{\textrm{COrd}}
\newcommand{\Cord}{\textrm{\bf COrd}}
\newcommand{\CordM}{\Cord_{\textrm{M}}}
\newcommand{\CordK}{\Cord_{\textrm{K}}}
\newcommand{\CordHL}{\Cord_{\textrm{HL}}}
\newcommand{\leqW}{\leq_{\textrm{W}}}
\newcommand{\leW}{<_{\textrm{W}}}
\newcommand{\equivW}{\equiv_{\textrm{W}}}
\newcommand{\geqW}{\geq_{\textrm{W}}}
\newcommand{\pipeW}{|_{\textrm{W}}}
\newcommand{\nleqW}{\nleq_{\textrm{W}}}
\newcommand{\Det}{\textrm{Det}}
\newcommand{\tktm}{$\mathrm{T}2\kappa\mathrm{TM}$}
\newcommand{\Set}[2]{\left\{#1 \mid #2\right\}}
\newcommand{\N}{\mathbb{N}}
\newcommand{\R}{\mathbb{R}}
\newcommand{\NN}{\N^{\N}}
\renewcommand{\O}{\mathcal{O}}
\newcommand{\V}{\mathcal{V}}
\newcommand{\A}{\mathcal{A}}

\newcommand\tboldsymbol[1]{%
\protect\raisebox{0pt}[0pt][0pt]{%
$\underset{\widetilde{}}{\boldsymbol{#1}}$}\mbox{\hskip 1pt}}

\newcommand{\bolds}{\tboldsymbol{\Sigma}}
\newcommand{\boldp}{\tboldsymbol{\Pi}}
\newcommand{\boldd}{\tboldsymbol{\Delta}}
\newcommand{\boldg}{\tboldsymbol{\Gamma}}

\newcounter{saveenumi}
\newcommand{\seti}{\setcounter{saveenumi}{\value{enumi}}}
\newcommand{\conti}{\setcounter{enumi}{\value{saveenumi}}}

\newcommand\cecilia[1]{}

\newcommand\arno[1]{}

\newcommand\eike[1]{}
\newcommand\manlio[1]{}

\maketitle

\begin{abstract}
In computable topology, a represented space is called computably discrete if its equality predicate is semidecidable. While any such space is classically isomorphic to an initial segment of the natural numbers, the computable-isomorphism types of computably discrete represented spaces exhibit a rich structure. We show that the widely studied class of computably enumerable equivalence relations (ceers) corresponds precisely to the computably Quasi-Polish computably discrete spaces. We employ computably discrete spaces to exhibit several separating examples in computable topology. We construct a computably discrete computably Quasi-Polish space admitting no decidable properties, a computably discrete and computably Hausdorff precomputably Quasi-Polish space admitting no computable injection into the natural numbers, a two-point space which is computably Hausdorff but not computably discrete, and a two-point space which is computably discrete but not computably Hausdorff. We further expand an example due to Weihrauch that separates computably regular spaces from computably normal spaces.
\end{abstract}


\section{Introduction}

A represented space is computably discrete if equality is semidecidable. Up to homeomorphism,  this notion is not particularly interesting: since every computably discrete represented space is countable and classically discrete, computably discrete spaces are classically isomorphic to the natural numbers or a finite initial segment thereof. However, in the realm of computable topology, we can provide a much more fine-grained analysis showcasing how, in fact, computably discrete spaces exhibit a rich structure and can be used as a source of counterexamples to separate the computable counterparts of classical topological notions.


After a short introduction on the notation and the main relevant notions (Section~\ref{sec:background}), in Section~\ref{sec:compquasipolish}, we focus on computably discrete computably Quasi-Polish spaces. In particular, we show that such spaces are isomorphic to quotients of $\mathbb{N}$ by a computably enumerable equivalence relation (ceer). The theory of ceers has received a lot of attention over the past few years. In particular, there is an extensive literature on the structure of ceers under so-called \emph{computable reducibility}, a notion of reducibility between ceers that can be seen as a computable counterpart of Borel reducibility on equivalence relations, widely studied in descriptive set theory. In particular, given two ceers $R$ and $S$, the computable reducibility of $R$ to $S$ can be restated as the existence of a computable injection between the quotients $\mathbb{N}/R$ and $\mathbb{N}/S$. For a more thorough overview of the theory of ceers, we refer the reader to \cite{GG2001ceers,ABS2017ceers}.
It is easy to see that computably discrete spaces are not necessarily computably Hausdorff, \textit{i.e.}, computable discreteness does not imply that equality is decidable. We significantly improve this result by constructing an infinite computably discrete computably Quasi-Polish space admitting no non-trivial decidable properties at all, answering a question by Emmanuel Rauzy.

In Section~\ref{sec:discrete_precompQP}, we turn our attention to computably discrete precomputably (\textit{i.e.}, not necessarily computably overt) Quasi-Polish spaces. We show that there is a computably discrete and computably Hausdorff precomputably Quasi-Polish space admitting no computable injection into the natural numbers. We also explore some computational properties of an example employed by Weihrauch to separate computably regular spaces from computably normal spaces.

In Section~\ref{sec:finite_spaces}, we focus on finite spaces. In particular, we show that computably Hausdorff and computably discrete are incomparable notions by building a two-point space that is computably Hausdorff but not computably discrete and a two-point space that is computably discrete but not computably Hausdorff.

Finally, in Section~\ref{sec:N_isomorphism}, we conclude with some observations on
the computable-isomorphism types of $\mathbb{N}$.

This is a slightly extended version of the conference paper \cite{discreteness-cie}.

\section{Background}
\label{sec:background}
We assume some familiarity with the theory of represented spaces as presented e.g.~in \cite{pauly-synthetic}.
To make the paper more self-contained, we provide a quick summary of the main definitions and results.
For the notion of a (pre)computably Quasi-Polish space we refer to \cite{paulydebrecht4}.
\eike{We move from ``general'' to ``concrete''}

\subsection{Represented Spaces}
\manlio{I believe Arno's notation wants the set to be $X$, and the represented space to be $\mathbf{X}=(X,\delta_{\mathbf{X}})$. I may be wrong though, so I'll leave it to Arno to complain (if he wants to).}
\eike{Remaining todo: implement this consistently everywhere}
A represented space is a set $X$ together with a partial surjection $\delta_\mathbf{X} \colon \subseteq \NN \to X$ called the \emph{representation}.
We write $\mathbf{X} = (X, \delta_\mathbf{X})$ for a represented space and its representation.
A point $p \in \NN$ with $\delta_\mathbf{X}(p) = x$ is called a $\delta_\mathbf{X}$-\emph{name} of $x$, or simply a name of $x$.
A point $x \in \mathbf{X}$ in a represented space is called \emph{computable} if it has a computable name.
In the sequel, we will refer to represented spaces simply as ``spaces''.

A \emph{(partial) multi-valued map} $F \colon \subseteq \mathbf{X} \rightrightarrows \mathbf{Y}$ between spaces is simply a relation $F \subseteq \mathbf{X} \times \mathbf{Y}$.
For a multi-valued map $F$, we let $F(x) = \Set{y \in \mathbf{Y}}{(x,y) \in F}$ be the set of its \emph{values} in $x$ and we let
$\dom(F) = \Set{x \in \mathbf{X}}{F(x) \neq \emptyset}$ be its \emph{domain}.
A \emph{realiser} of a multi-valued map
$F \colon \subseteq \mathbf{X} \rightrightarrows \mathbf{Y}$
is a partial map
$R_F \colon \subseteq \NN \to \NN$
with
$\dom(R_F) \supseteq \delta_\mathbf{X}^{-1}(\dom(F))$
and $\delta_{\mathbf{Y}}\left(R_F(p)\right) \in F(\delta_{\mathbf{X}}(p))$ for all $p \in \delta_\mathbf{X}^{-1}(\dom(F))$.
Multi-valued maps differ from relations in how their composition is defined.
Let $F \colon \mathbf{X} \mto \mathbf{Y}$ and $G \colon \mathbf{Y} \mto \mathbf{Z}$ be multi-valued maps.
Define
$\dom(G \circ F) = \Set{x \in \mathbf{X}}{x \in \dom(F) \land F(x) \subseteq \dom(G)}$
and $G \circ F(x) = \bigcup_{y \in F(x)}G(y)$ for all $x \in \dom(G \circ F)$.
Thus, multi-valued maps are composed like relations, but a point $x$ belongs to the domain of $G \circ F$ if and only if
\emph{every} value $y \in F(x)$ belongs to the domain of $G$.
This ensures that if $R_F$ is a realiser of $F$ and $R_G$ is a realiser of $G$, then $R_G \circ R_F$ is a realiser of $G \circ F$.

A multi-valued map $F$ is called \emph{computable} if it has a computable realiser.
To express that $F$ is computable, we will also say that ``we can compute $F$'' or, to emphasize multi-valuedness, ``we can non-deterministically compute $F$''.
A map is called \emph{continuously realisable} or simply \emph{continuous} if it has a continuous realiser.
Note that ``continuity'' in this sense is a priori not connected to any kind of topological continuity.

Partial continuous functions of type $\NN \to \NN$ can be coded by Baire space elements $p \in \NN$, where $p(0)$ is interpreted as the index of a Turing machine and the function $n \mapsto p(n + 1)$ is interpreted as an oracle that the machine has access to.
For represented spaces $\mathbf{X}$ and $\mathbf{Y}$ we define the \emph{function space} $\mathbf{Y}^{\mathbf{X}}$ whose underlying set is the set of all total single-valued continuously realisable functions from $\mathbf{X}$ to $\mathbf{Y}$.
The representation $\delta \colon \subseteq \NN \to \mathbf{Y}^{\mathbf{X}}$ sends a $p \in \NN$ that codes the realiser of a function $f \colon \mathbf{X} \to \mathbf{Y}$ to the function it realises.

The \emph{product} $\mathbf{X} \times \mathbf{Y}$ of represented spaces $\mathbf{X}$ and $\mathbf{Y}$ has as underlying set the Cartesian product of $\mathbf{X}$ and $\mathbf{Y}$.
A sequence $p \in \NN$ is a name of $(x,y) \in \mathbf{X} \times \mathbf{Y}$ if and only if $n \mapsto p(2n)$ is a name of $x$ and $n \mapsto p(2n + 1)$ is a name of $y$.

It can be shown that the category of represented spaces with computable (total single-valued) maps as morphisms is Cartesian closed, with products $\mathbf{X} \times \mathbf{Y}$ and exponentials $\mathbf{Y}^{\mathbf{X}}$ defined as above.

The analogues of topological concepts over represented spaces are introduced based on the notion of ``continuous map'' given above.
Sierpinski space $\mathbb{S}$ is the represented space consisting of the set $\{\top,\bot\}$
and the representation $\delta \colon \NN \to \mathbb{S}$ with $\delta(0^\omega) = \bot$ and $\delta(p) = \top$ for all $p \neq 0^\omega$.

Let $\mathbf{X}$ be a represented space.
A subset $U$ of $\mathbf{X}$ is called \emph{open} if the characteristic function
\[
    \chi_U \colon \mathbf{X} \to \mathbb{S} \qquad\qquad
    \chi_U(x) =
    \begin{cases}
        \top &\text{if }x \in U,\\
        \bot &\text{otherwise.}
    \end{cases}
\]
is continuous.
Dually, a subset $A$ of $\mathbf{X}$ is called \emph{closed} if $\mathbf{X} \setminus U$ is open.
A set is called \emph{computably} open if the above function is computable, and computably closed if its complement is computably open.

By identifying open sets with their characteristic functions, we obtain the space $\O(\mathbf{X})$ of opens by identification with the exponential $\mathbb{S}^\mathbf{X}$.
We obtain the space $\A(\mathbf{X})$ of closed sets by identifying a closed set with its complement.

For a represented space $\mathbf{X}$, the space $\V(\mathbf{X})$ of \emph{overts} of $\mathbf{X}$ is the space of all closed subsets of $\mathbf{X}$, identified with a subspace of $\O(\O(\mathbf{X}))$
via the map $A \mapsto \Set{U \in \O(\mathbf{X})}{U \cap A \neq \emptyset}$.
A closed subset $A$ of $\mathbf{X}$ is called \emph{computably overt} if it is a computable point in $\V(\mathbf{X})$.
In particular, the space $\mathbf{X}$ is called computably overt if we can semi-decide for a given $U \in \O(\mathbf{X})$ if $U$ is non-empty.

A space is called \emph{computably Hausdorff} if the diagonal $\Delta_\mathbf{X} \subseteq \mathbf{X}\times \mathbf{X}$ is a computably closed subset of $\mathbf{X} \times \mathbf{X}$, or in other words, if inequality of points in $\mathbf{X}$ is semidecidable.
Dually, a space is called \emph{computably discrete} if $\Delta_\mathbf{X}$ is a computably open subset of $\mathbf{X} \times \mathbf{X}$, or in other words, if equality of points in $\mathbf{X}$ is semidecidable.

For all $\mathbf{X}$ we have a canonical computable map $\mathbf{X} \to \O(\O(\mathbf{X}))$ sending a point $x$ to the set $\Set{U \in \O(\mathbf{X})}{x \in U}$ of all point sets containing $x$.
In breach with the usual terminology, we will refer to this set as the \emph{neighbourhood filter} of $x$, although it contains only open neighbourhoods.
If this map admits a continuous partial inverse, then $\mathbf{X}$ is called \emph{admissible}.
If this partial inverse is even computable, then $\mathbf{X}$ is called \emph{computably admissible}.
Computable admissibility can be viewed as an effectivisation of $T_0$ separation.
The latter says that a points are determined by their neighbourhood filters.
The former says that points can be computed from their neighbourhood filters.
Computably admissible spaces are closed under subspaces, products, and exponentials.
In fact, the computably admissible spaces form an exponential ideal in the category of represented spaces.
For admissible spaces there is a connection between continuous realisability and continuity:
if $Y$ is an admissible space and $\mathbf{X}$ is an arbitrary space, then a single-valued map
$f \colon \mathbf{X} \to \mathbf{Y}$
is continuously realisable if and only if it is topologically continuous with respect to the final topologies induced by the representations of $\mathbf{X}$ and $\mathbf{Y}$.
In general, every continuously realisable function is topologically continuous in this sense, but there may be topologically continuous functions which are not continuously realisable.
If $\mathbf{X}$ is admissible, then the open subsets and closed subsets as defined above correspond to the topologically open and closed subsets with respect to the final topology induced by the representation.
However, we should warn the reader that products and subspaces of admissible represented spaces do not correspond to products and subspaces of the associated topological spaces.
Rather, products of admissible represented spaces carry the sequentialisation of the product topology, and subspaces carry the sequentialisation of the subspace topology.
In general, the sequentialisation of a topology can be strictly finer than the topology itself.
This implies for example that a computably Hausdorff space need not be Hausdorff in the classical topological sense (since the definition involves a product space). Rather, the topology of a computably Hausdorff space is only sequentially Hausdorff in the sense that every convergent sequence has a unique limit.
See \cite[Example 6.2]{Franklin1965} for an example separating this notion from Hausdorffness.

\subsection{Effectively Countably-Based Spaces}

\eike{TODO: Countably based spaces are particularly nice to work with. Further, topology and ``synthetic topology'' agree.}

\begin{definition}
A represented space $\mathbf{X}$ is called effectively countably-based, if there is a computable map $B_{\cdot} : \mathbb{N} \to \mathcal{O}(\mathbf{X})$ such that the induced computable map $$A \mapsto \bigcup_{n \in A} B_n : \mathcal{V}(\mathbb{N}) \to \mathcal{O}(\mathbf{X})$$ has a computable multi-valued right-inverse.
In this case, we call $(B_n)_n$ an \emph{effective basis} of $\mathbf{X}$.
\end{definition}

Without assuming admissibility, effectively countably based spaces are not closed under subspaces, in fact we even have:
\begin{proposition}
Every represented space $\mathbf{X}$ occurs as a subspace of an effectively countably-based space $\mathbf{X}'$.
\begin{proof}
Let $\delta_\mathbf{X} : \subseteq \Cantor \to \mathbf{X}$ be the representation of $\mathbf{X}$. We let $\mathbf{X}'$ have the underlying set $X \uplus \{\bot\}$, with representation $\delta$ defined as follows: $\delta(\langle p, q\rangle) = \bot$ if $p$ contains infinitely many $1$s, and $\delta(\langle p, q\rangle) = \delta_\mathbf{X}(q)$ if $p$ contains only finitely many $1$s. Then $\mathbf{X}'$ has the indiscrete topology and is overt, which suffices to make it effectively countably based.
\end{proof}
\end{proposition}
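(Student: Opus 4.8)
The plan is to embed $\mathbf{X}$ into a one-point extension on which the topology collapses to the indiscrete one, exploiting the fact that an overt indiscrete space is trivially effectively countably-based. Concretely, I would set $X' = X \uplus \{\bot\}$ and define a representation $\delta'$ by splitting each name into a \emph{mode} part $p \in \Cantor$ and a \emph{payload} part $q$: put $\delta'(\langle p, q\rangle) = \bot$ whenever $p$ contains infinitely many $1$s, and $\delta'(\langle p, q\rangle) = \delta_\mathbf{X}(q)$ whenever $p$ contains only finitely many $1$s (requiring $q \in \dom(\delta_\mathbf{X})$ in the latter case, but allowing arbitrary $q$ in the former). The design principle is that ``$p$ has infinitely many $1$s'' is a $\Pi^0_2$, hence non-open, condition, so no open set can tell $\bot$ apart from the points of $\mathbf{X}$.

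I would then verify three things. First, that $\mathbf{X}$ is a subspace of $\mathbf{X}'$: the assignment $q \mapsto \langle 0^\omega, q\rangle$ together with the payload projection witness a computable isomorphism between $\delta_\mathbf{X}$ and the restriction of $\delta'$ to $(\delta')^{-1}(X)$. Second, that the final topology of $\mathbf{X}'$ is indiscrete. The key is that $\bot$ is a focal point contained in every nonempty open set. Indeed, a name $\langle 0^\omega, q\rangle$ of a point $x \in X$ can be perturbed in its tail to $\langle 0^k 1^\omega, q\rangle$, a name of $\bot$; since membership in an open set is forced by a finite prefix of a name, any open set containing $x$ must also contain $\bot$. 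Conversely, a name $\langle 1^\omega, q\rangle$ of $\bot$ perturbs to $\langle 1^k 0^\omega, q\rangle$, a name of $\delta_\mathbf{X}(q)$, so any open set containing $\bot$ contains all of $X$. Hence the only open sets are $\emptyset$ and $X'$.

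Third, the same focal point yields overtness, and overtness in turn yields an effective basis. Since $\bot$ has the computable name $\langle 1^\omega, 0^\omega\rangle$ and lies in every nonempty open, semideciding $U \neq \emptyset$ reduces to evaluating $\chi_U$ at $\bot$, which is semidecidable by the definition of $\mathbb{S}$; thus $\mathbf{X}'$ is overt. Taking the trivial basis $B_n = X'$, the induced map $A \mapsto \bigcup_{n \in A} B_n$ sends $\emptyset$ to $\emptyset$ and every nonempty overt $A$ to $X'$, and a computable multi-valued right-inverse is obtained by running the overtness test on the input $U \in \mathcal{O}(\mathbf{X}')$ and enumerating $0$ into the output set precisely when $U$ is confirmed nonempty. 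This is exactly the data required by the definition of effective countable-basedness.

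The main obstacle I anticipate is the topological bookkeeping in the indiscreteness step: one has to make the two perturbation arguments precise and, in particular, handle the domain of $\delta'$ carefully, so that $\bot$ retains names for every payload while points of $X$ keep the constraint $q \in \dom(\delta_\mathbf{X})$. Once indiscreteness and overtness are established, the passage to an effective basis is essentially immediate, because on an indiscrete space the only nontrivial open one ever has to reconstruct is the whole space, and overtness is precisely the ability to reconstruct it.
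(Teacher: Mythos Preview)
Your proposal is correct and follows exactly the paper's construction: the same one-point extension $X \uplus \{\bot\}$ with the same ``mode/payload'' representation, leading to an overt indiscrete space. You have simply spelled out in detail the verifications (subspace, indiscreteness via prefix-perturbation, overtness via the computable focal point, trivial effective basis) that the paper compresses into a single sentence.
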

However, if $\mathbf{X}$ is effectively countably based and (computably) admissible, then so is any subspace of $\mathbf{X}$.

\begin{definition}
A space $\mathbf{X}$ admits an effectively fibre-overt representation if there is a computable surjection $\delta : \subseteq \Baire \to \mathbf{X}$ such that $\delta^{-1} : \mathbf{X} \mto \Baire$ is computable and $x \mapsto \overline{\delta^{-1}(x)} : \mathbf{X} \to \mathcal{V}(\Baire)$ is computable.
\end{definition}

\begin{proposition}\label{Proposition: characterisation of ccb spaces}
The following are equivalent for a computably admissible space $\mathbf{X}$:
\begin{enumerate}
\item $\mathbf{X}$ is effectively countably-based.
\item $\mathbf{X}$ computably embeds into $\mathcal{O}(\mathbb{N})$, \textit{i.e.}, there is a computable map $i \colon \mathbf{X} \to \O(\N)$ with a computable inverse $i^{-1} \colon i(\mathbf{X}) \to \mathbf{X}$.
\item $\mathbf{X}$ admits an effectively fibre-overt representation.
\end{enumerate}
\end{proposition}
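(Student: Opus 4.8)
The plan is to prove the three statements equivalent by routing everything through the single bridging object $\O(\N)$ and exploiting that a computably admissible space is recoverable from its neighbourhood filter. Concretely I would establish $(1)\Leftrightarrow(2)$ and $(2)\Leftrightarrow(3)$. Two encodings are used throughout: an open subset of $\N$ is the same datum as an enumerable set, and an overt $V\subseteq\Baire$ is presented, computably and reversibly, by the open set $\Set{\sigma\in\baire}{[\sigma]\cap V\neq\emptyset}$ of cylinders it meets. The recurring move is that a test $x\in U$ can be reduced to a nonemptiness test between two enumerable sets, which is exactly what semidecidability supplies.

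For $(1)\Rightarrow(2)$, given an effective basis $(B_n)_n$ I would set $i(x)=\Set{n}{x\in B_n}$, which is computable since membership in $B_n$ is semidecidable uniformly in $n$. Injectivity holds because the $B_n$ form a basis of a $T_0$ space (computable admissibility gives $T_0$), so distinct points are separated by some basic open. To compute $i^{-1}$ I would reconstruct the neighbourhood filter of $x$ from $i(x)$: given $U\in\O(\mathbf{X})$, apply the right-inverse of the union map to obtain an overt $A\subseteq\N$ with $U=\bigcup_{m\in A}B_m$; then $x\in U$ iff $A\cap i(x)\neq\emptyset$, which is semidecidable from the two enumerations, and computable admissibility then returns $x$. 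For $(2)\Rightarrow(1)$ I would simply note that $i$ presents $\mathbf{X}$ as a subspace of $\O(\N)$; since $\O(\N)=\mathbb{S}^{\N}$ lies in the exponential ideal of computably admissible spaces and is effectively countably-based — its Scott basis $C_F=\Set{S}{F\subseteq S}$ for finite $F$ has the required right-inverse because $C_F\subseteq\mathcal{U}$ iff $F\in\mathcal{U}$ (opens of $\O(\N)$ being upward closed), which is semidecidable in $\mathcal{U}$ — the cited preservation of effective countable-basedness under admissible subspaces applies.

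For $(2)\Rightarrow(3)$, I would let a $\delta$-name of $x$ be a standard $\O(\N)$-name of $i(x)$; since $i$ and $i^{-1}$ are computable we get $\delta\equiv\delta_\mathbf{X}$, handling computability of $\delta$ and of $\delta^{-1}$. The content is a sub-lemma that the standard representation of $\O(\N)$ is fibre-overt: for enumerable $S$, the cylinders met by enumerations of $S$ are exactly the finite strings all of whose nonzero entries name elements of $S$, a set semidecidable from $S$, so $S\mapsto\overline{\delta^{-1}(S)}$ is computable into $\V(\Baire)$. For $(3)\Rightarrow(2)$ I would dually set $i(x)=\Set{\sigma\in\baire}{[\sigma]\cap\delta^{-1}(x)\neq\emptyset}$, computable from $x\mapsto\overline{\delta^{-1}(x)}$. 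Injectivity follows as before: equal prefix-trees force $\overline{\delta^{-1}(x)}=\overline{\delta^{-1}(y)}$, and since $\delta\equiv\delta_\mathbf{X}$ is continuous and $\mathbf{X}$ is $T_0$ this forces $x=y$. To compute $i^{-1}$ I would once more recover the neighbourhood filter: writing $\delta^{-1}(U)=\bigcup_{\tau\in E}[\tau]$ with $E$ enumerable from $U$, one has $x\in U$ iff $E\cap i(x)\neq\emptyset$, and computable admissibility yields $x$.

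The main obstacle, and the step I would treat most carefully, is the point-recovery in the two ``$\Rightarrow(2)$'' directions. The subtlety is that $\overline{\delta^{-1}(x)}$ may contain branches that are not names of $x$ (or even lie outside $\dom\delta$), so one cannot read a name off the overt fibre directly; the argument must stay on the side of opens and transport the neighbourhood filter instead of a name. The crucial verification is that meeting a cylinder $\tau\in E$ of the preimage $\delta^{-1}(U)$ is witnessed precisely by $\tau$ being a prefix of an actual name of $x$, so that $\bigcup_{\tau\in E}[\tau]\cap\delta^{-1}(x)\neq\emptyset$ genuinely coincides with $E\cap i(x)\neq\emptyset$. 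This is where single-valuedness of $\delta$ and the exact met-cylinder encoding of the fibre must be combined, and it is the point I would write out in full.
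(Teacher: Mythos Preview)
Your argument is correct and largely parallels the paper's, but with a different logical routing. The paper proves the cycle $(1)\Rightarrow(2)\Rightarrow(3)\Rightarrow(1)$; you instead aim for $(1)\Leftrightarrow(2)$ and $(2)\Leftrightarrow(3)$. Your $(1)\Rightarrow(2)$ and $(2)\Rightarrow(3)$ coincide with the paper's (same embedding $i(x)=\{n:x\in B_n\}$, same appeal to the concrete fibre-overt total representation $\gamma$ of $\O(\N)$). Where the paper closes the cycle with a direct $(3)\Rightarrow(1)$ --- pushing the cylinder basis of $\Baire$ forward along the map $\delta_*\colon\O(\Baire)\to\O(\mathbf{X})$, whose computability is exactly what fibre-overtness provides --- you instead supply a direct $(3)\Rightarrow(2)$. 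That is a legitimate alternative, and in fact your embedding $i(x)=\{\sigma:[\sigma]\cap\delta^{-1}(x)\neq\emptyset\}$ is just $x\mapsto\{\sigma:x\in\delta_*([\sigma])\}$ in disguise, so the two routes are doing the same underlying work; your recovery of the neighbourhood filter via ``$E\cap i(x)\neq\emptyset$'' is the same intersection-of-enumerations move the paper uses.

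One caution on your $(2)\Rightarrow(1)$: you invoke the preservation of effective countable-basedness under admissible subspaces, which the paper \emph{states} just before this proposition but does not prove there. The natural proof of that preservation goes through the present characterisation (or through the subsequent proposition on extending open sets, which in turn cites this one), so as written you are leaning on a forward reference and risk circularity. The fix is easy and you already have the ingredients: the fibre-overtness of $\gamma$ that you establish for $(2)\Rightarrow(3)$, together with the totalisation trick for realisers, lets you extend opens from $i(\mathbf{X})\subseteq\O(\N)$ to $\O(\N)$ directly, after which your Scott basis $(C_F)_F$ restricts to an effective basis of $\mathbf{X}$. But this step should be spelled out rather than cited.
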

\begin{proof}
    $(1 \Rightarrow 2)$:
    Assume that $\mathbf{X}$ is effectively countably based with basis $(B_{n})_n$.
    Consider the computable map
    \[
        i \colon \mathbf{X} \to \O(\N),
        \;
        i(x) = \Set{n \in \N}{x \in B_{n}}.
    \]
    We claim that this map is a computable embedding.
    Assume we are given $i(x) \in \O(\N)$.
    We just need to show that we can compute $\Set{U \in \O(\mathbf{X})}{x \in U}$ --
    by computable admissibility, this is enough to compute $x$.
    To prove the claim, assume that we are given $U \in \O(\mathbf{X})$.
    Then, since $\mathbf{X}$ is effectively countably based, we can compute a set $A \in \V(\N)$
    with $U = \bigcup_{n \in A} B_n$.
    It is easy to see that $x \in U$ if and only if $A \cap i(x) \neq \emptyset$,
    so that we can semi-decide if $x \in U$. This proves the claim.

    $(2 \Rightarrow 3)$:
    Now assume that $\mathbf{\mathbf{X}}$ computably embeds into $\O(\N)$ via a computable embedding
    $i \colon \mathbf{\mathbf{X}} \to \O(\N)$.
    The space $\O(\N)$ admits the following effectively fibre-overt total representation:
    \[
        \gamma \colon \NN \to \O(\N) \qquad \qquad
        \gamma(p) = \Set{n \in \N}{\exists k. \; p(k) = n + 1}.
    \]
    Define
    $
        \delta \colon \subseteq \NN \to \mathbf{\mathbf{X}}
    $
    with $\dom(\delta) = \gamma^{-1}(i(\mathbf{X}))$ and
    $\delta(p) = i^{-1}\left(\gamma(p)\right)$.
    Then $\delta^{-1} = \gamma^{-1} \circ i$ is computable
    as a multi-valued map $\mathbf{X} \mto \NN$
    and $\overline{\delta^{-1}(\cdot)} = \overline{\gamma^{-1} \circ i(\cdot)}$
    is computable as a map
    $\overline{\delta^{-1}(\cdot)} \colon \mathbf{X} \to \V(\NN)$.

    $(3 \Rightarrow 1)$:
    Finally, assume that $\mathbf{X}$ admits an effectively fibre-overt representation $\delta \subseteq \NN \to \mathbf{X}$.
    Then the map $\delta_* \colon \O(\NN) \to \O(\mathbf{X})$ which sends $U \in \O(\NN)$ to $\delta(U \cap \dom(\delta))$ is well-defined and continuous.
    Indeed, given $U \in \O(\NN)$ and $x \in \mathbf{X}$ we can compute $\overline{\delta^{-1}(x)} \in \V(\NN)$ and accept $x$ if and only if
    $\overline{\delta^{-1}(x)} \cap U \neq \emptyset$.
    It is straight-forward to check that this computes the map $U \mapsto \delta(U \cap \dom(\delta))$.

    Now, observe that $\NN$ is effectively countably based.
    Fixing a computable surjection $\langle\cdot\rangle\colon\N^* \to \N$, an effective basis is given by
    \[B_{\langle n_1,\dots,n_k\rangle} = \Set{p \in \NN}{p(j) = n_j \text{ for }j = 1,\dots,k}.\]

    This effective basis $(B_n)_n$ yields the computable map $n \mapsto \delta_*\left(B_n\right) \colon \N \to \O(\mathbf{X})$.

    Now, suppose we are given an open set $V \in \O(\mathbf{X})$.
    A name of $V$ is the code $p$ a partial function $F \colon \subseteq \NN \to \NN$ with
    $\dom(F) \supseteq \delta_Y^{-1}\left(\mathbf{X}\right)$ satisfying for all $q \in \delta_Y^{-1}\left(\mathbf{X}\right)$ that
    $\delta_\mathbf{X}(q) \in V$ if and only if there exists some $k$ such that $F(q)(k) \neq 0$.

    Given a code $p$ as above we can compute a new code $p'$ representing the same open set via a total function $G \colon \NN \to \NN$ as follows:
    let $p'(n) = p(n)$ for $n > 0$.
    Let $p'(0)$ be the index of a Turing machine implementing the following algorithm: given $n \in \N$, simulate the Turing machine $p(0)$
    for $n$ steps on the inputs $0, \dots, n$. If the machine halts and outputs a non-zero number, output $1$, otherwise output $0$.

    Thus, we can compute in a multi-valued manner an open set $U \in \O(\NN)$ with
    $\delta_*(U) = V$.
    Using that $\NN$ is computably countably based, we can compute some $A \in \V(\N)$ with
    $U = \bigcup_{n \in A} B_n$,
    yielding
    $V = \delta_*(U) = \bigcup_{n \in A} \delta_*(B_n)$.
    Hence, $\left(\delta_*\left(B_{n}\right)\right)_n$ is an effective basis of $\mathbf{X}$.
\end{proof}

Subspaces of countably based admissible represented spaces carry the subspace topology.
The next proposition effectivises this fact:

\begin{proposition}\label{Proposition: extending open sets in ccb spaces}
If $\mathbf{Y}$ is computably admissible and effectively countably-based and $\mathbf{X} \subseteq \mathbf{Y}$ is a subspace, then the restriction map $U \mapsto (X \cap U) : \mathcal{O}(\mathbf{Y}) \to \mathcal{O}(\mathbf{X})$ has a computable multi-valued right-inverse.
\end{proposition}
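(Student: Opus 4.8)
The plan is to reduce everything to the effectively fibre-overt representation of $\mathbf{Y}$ furnished by Proposition~\ref{Proposition: characterisation of ccb spaces} and to the pushforward map constructed in the proof of its implication $(3 \Rightarrow 1)$. First I would invoke the equivalence $(1) \Leftrightarrow (3)$ to fix an effectively fibre-overt representation $\delta \colon \subseteq \NN \to \mathbf{Y}$; the subspace $\mathbf{X}$ then carries the restricted representation $\delta_\mathbf{X} = \delta|_{\delta^{-1}(X)}$. Applying the proof of $(3 \Rightarrow 1)$ to $\mathbf{Y}$ in the role of the ccb space, the pushforward $\delta_* \colon \O(\NN) \to \O(\mathbf{Y})$, $W \mapsto \delta(W \cap \dom(\delta))$, is computable. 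The right-inverse I aim to build will send an open set $V \in \O(\mathbf{X})$ to an open set $U \in \O(\mathbf{Y})$ of the form $U = \delta_*(W)$ for a suitable $W \in \O(\NN)$ read off from a name of $V$.

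Concretely, given a name of $V \in \O(\mathbf{X})$ --- that is, a code for a partial realiser $F$ of $\chi_V \colon \mathbf{X} \to \mathbb{S}$ with $\dom(F) \supseteq \delta^{-1}(X)$ --- I would apply the same partial-to-total name conversion used in the proof of Proposition~\ref{Proposition: characterisation of ccb spaces} to produce a total $G \colon \NN \to \NN$ describing an open set $W = \Set{q \in \NN}{\exists k.\, G(q)(k) \neq 0} \in \O(\NN)$ with the property that, for every $q \in \delta^{-1}(X)$, one has $q \in W$ if and only if $\delta(q) \in V$. This manipulation is computable and is the only work done on names. I then output $U := \delta_*(W)$, which is computable in $V$ by the previous paragraph.

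It remains to verify that $X \cap U = V$, and I expect the genuine obstacle to lie precisely in this compatibility between restriction and pushforward rather than in any computation: one must ensure that the open set $W$ extracted from a name of $V$ agrees with $V$ along the fibres over $X$ while no point of $\mathbf{Y} \setminus X$ is inadvertently forced into or out of $U$ in a way that alters the trace on $X$. Unfolding the definitions gives $X \cap \delta_*(W) = \delta(W \cap \dom(\delta)) \cap X = \Set{\delta(q)}{q \in W,\ \delta(q) \in X} = \Set{\delta(q)}{q \in \delta^{-1}(X),\ \delta(q) \in V}$, and since $V \subseteq X$ and every point of $V$ has a $\delta$-name lying in $\delta^{-1}(X)$, this last set equals $V$; the key input is that the closed fibres $\overline{\delta^{-1}(x)}$ computed in $\mathbf{X}$ coincide with those computed in $\mathbf{Y}$, so that fibre-overtness is inherited and $\delta_*$ behaves coherently on the subspace. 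Equivalently, and perhaps more transparently, one can route the argument through the basis: the restricted family $(X \cap B_n)_n$ is an effective basis of $\mathbf{X}$ --- its decomposition map being invertible precisely because $\mathbf{X}$ inherits the fibre-overt representation of $\mathbf{Y}$ --- so one computes $A \in \V(\N)$ with $V = \bigcup_{n \in A}(X \cap B_n)$ and sets $U = \bigcup_{n \in A} B_n$, whereupon the same set-theoretic identity yields $X \cap U = V$.
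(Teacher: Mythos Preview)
Your proof is correct and follows essentially the same approach as the paper: both fix an effectively fibre-overt representation $\delta$ of $\mathbf{Y}$, convert a name of $V \in \O(\mathbf{X})$ into a total $G$ describing an open $W \subseteq \NN$ that agrees with $V$ on $\delta^{-1}(X)$, and then push $W$ forward along $\delta$ using fibre-overtness --- the paper just writes out the formula $U = \Set{y}{\exists q \in \overline{\delta^{-1}(y)}.\,\exists k.\,G(q)(k) \neq 0}$ explicitly rather than invoking the map $\delta_*$. Your closing basis-based variant is also valid and arguably more transparent, though note that its correctness hinges on $(X \cap B_n)_n$ being an effective basis of $\mathbf{X}$, which is precisely the inheritance claim the paper states (without proof) just before Proposition~\ref{Proposition: characterisation of ccb spaces}.
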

\begin{proof}
    We may assume without loss of generality that the representation $\delta_\mathbf{Y}$ of $\mathbf{Y}$ is effectively fibre-overt.
    Assume we are given a name of an open set $V \in \O(\textbf{X})$.
    Then as in the proof of $(3 \Rightarrow 1)$ in Proposition \ref{Proposition: characterisation of ccb spaces}, we can
    compute a total function $G \colon \NN \to \NN$ such that for all $q \in \delta_\mathbf{Y}^{-1}\left(\mathbf{X}\right)$ we have
    $\delta_\mathbf{X}(q) \in V$ if and only if there exists some $k$ such that $G(q)(k) \neq 0$.

    Using computability of $\overline{\delta_\mathbf{Y}^{-1}(y)}$ as an overt set and the definition of overtness,
    we can compute the open set
    \[
        U = \Set{y \in \mathbf{Y}}{ \exists q \in \overline{\delta_\mathbf{Y}^{-1}(y)}. \exists k \in \N. G(q)(k) \neq 0}.
    \]
    This set is computable as an open set since $\overline{\delta_\mathbf{Y}^{-1}(y)}$ is computable as an overt set.
    Observe that by continuity we have
    \[
        U = \Set{y \in \mathbf{Y}}{ \exists q \in \delta_\mathbf{Y}^{-1}(y). \exists k \in \N. G(q)(k) \neq 0}.
    \]
    If $x \in \mathbf{X}$ and $q \in \NN$ with $\delta_\mathbf{Y}(q) = x$ then we have by definition of $G$ that $x \in V$ if and only if $G(q)(k) \neq 0$ for some $k$.
    This shows that $U \cap \mathbf{X} = V$.
\end{proof}

\subsection{Computably Quasi-Polish Spaces}
Quasi-Polish spaces were introduced by de Brecht \cite{debrecht6} as a unifying framework for descriptive set theory over Polish spaces and continuous domains. A topological space is Quasi-Polish if and only if it is countably based and admits a total admissible representation (see \cite[Theorem 49]{debrecht6}).
de Brecht, Pauly, and Schröder \cite{paulydebrecht4} have proposed an effectivisation of Quasi-Polish spaces, closely resembling the ideal presentation of effective domains, based on the following characterisation.
Let $\ll$ be a transitive relation over the natural numbers.
An \emph{ideal} with respect to $\ll$ is a non-empty subset $I$ of $\N$
which is \emph{downwards closed}, \textit{i.e.}, for all $x \in \N$, if there exists $y \in I$ with $x \ll y$, then $x \in I$,
and \emph{upwards directed}, \textit{i.e.}, for all $x, y \in I$ there exists $z \in I$ with $x \ll z$ and $y \ll z$.

The \emph{space of ideals} $\mathbf{I}(\ll)$ is the represented space whose underlying set is the set of ideals of $\ll$.
A point $p \in \NN$ represents an ideal $I$ if and only if $I = \Set{p(k)}{k \in \N}$.
A represented space is Quasi-Polish if and only if it is isomorphic to $\mathbf{I}(\ll)$ for some transitive relation $\ll$.
Further, a topological space is Quasi-Polish if and only if it is isomorphic to $\mathbf{I}(\ll)$ for some transitive relation $\ll$.

Now, a space $\mathbf{X}$ which is computably isomorphic to $\mathbf{I}(\ll)$ for some \emph{computably enumerable} transitive relation $\ll$ on $\N$ is called a \emph{precomputably Quasi-Polish}.
If $\mathbf{X}$ is precomputably Quasi-Polish and computably overt, then $\mathbf{X}$ is called \emph{computably Quasi-Polish}.
All precomputably Quasi-Polish spaces are computably admissible and effectively countably based.

\subsection{Computable metric spaces}

A \emph{(complete) computable metric space} is specified by a computable pseudometric $d \colon \N \times \N \to \R$ on the natural numbers.
The metric induces a represented space $\mathbf{X}_d$ as follows:
The underlying set of $\mathbf{X}_d$ is the Cauchy completion of $(\N, d)$.
A sequence $p \in \NN$ is a name of a point $x \in \mathbf{X}_d$ if and only if $d(p(n),x) < 2^{-n}$ for all $n$.
The space $\mathbf{X}_d$ is a complete metric space. The distance function $d \colon \mathbf{X}_d \times \mathbf{X}_d \to \R$ is computable.
The space $\mathbf{X}_d$ is a computably Hausdorff computably Quasi-Polish space.
More generally, we call a represented space $\mathbf{X}$ a computable metric space if it is isomorphic to $\mathbf{X}_d$ for some computable pseudometric $d$ on $\N$. A represented space is a complete computable metric space if and only if it is isomorphic to a computably overt computably $\Pi^0_2$-subset of the Hilbert cube $[0,1]^\omega$.

\subsection{Computably normal spaces}
In Section
\ref{sec:discrete_precompQP}
below we will present a discrete precomputably Quasi-Polish space which was (essentially) exhibited by Weihrauch as an example to separate ``computably regular'' from ``computably metrizable''.
We now introduce computably regular and computably normal spaces.

\begin{definition}
We call $\mathbf{X}$ computably regular, if the multi-valued map $\operatorname{Reg} : \subseteq \mathbf{X} \times \mathcal{A}(\mathbf{X}) \mto \mathcal{O}(\mathbf{X}) \times \mathcal{O}(\mathbf{X})$ with $(x, A) \in \dom(\operatorname{Reg})$ iff $x \notin A$ and $(U,V) \in \operatorname{Reg}(x,A)$ iff $x \in U$, $A \subseteq V$ and $U \cap V = \emptyset$ is computable.
\end{definition}

\begin{proposition}
If $\mathbf{X}$ is computably Hausdorff and computably discrete, then it is computably regular.
\end{proposition}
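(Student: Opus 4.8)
The plan is to exploit the fact that when computable discreteness and computable Hausdorffness hold simultaneously, singletons become clopen in a uniform and computable way. Concretely, computable discreteness says that the characteristic function $\chi_{\Delta_\mathbf{X}} \colon \mathbf{X} \times \mathbf{X} \to \mathbb{S}$ of the diagonal is computable; currying this map (using that represented spaces form a Cartesian closed category) yields a computable map $\mathbf{X} \to \O(\mathbf{X})$ sending $x$ to $\Set{y}{x = y} = \{x\}$. Thus $x \mapsto \{x\}$ is computable as a map into $\O(\mathbf{X})$. Dually, computable Hausdorffness says that inequality is semidecidable, so the same currying argument applied to the (open) complement of the diagonal produces a computable map $\mathbf{X} \to \O(\mathbf{X})$ sending $x$ to $\Set{y}{y \neq x} = \mathbf{X} \setminus \{x\}$.

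With these two uniformly computable assignments in hand, the separation itself is immediate. Given $(x, A) \in \dom(\operatorname{Reg})$, so $x \notin A$, I would simply output $U = \{x\}$ and $V = \mathbf{X} \setminus \{x\}$, both computed from $x$ alone via the maps above. The three defining conditions of $\operatorname{Reg}$ are then trivially verified: $x \in U$ holds by definition; $A \subseteq V = \mathbf{X} \setminus \{x\}$ holds precisely because $x \notin A$, which is exactly the domain hypothesis; and $U \cap V = \{x\} \cap (\mathbf{X} \setminus \{x\}) = \emptyset$.

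There is no genuine obstacle here: all the content lives in the first paragraph, namely recognising that the two separation hypotheses jointly force each point to be a computable clopen set, after which the canonical regularity witnesses $U = \{x\}$ and $V = \mathbf{X} \setminus \{x\}$ require no further work. The only point deserving a moment's care is that the closed input $A$ plays no computational role in producing the output; it enters solely through the domain constraint $x \notin A$, which is what guarantees correctness of $V$. In particular we never need to inspect $A$ as an element of $\A(\mathbf{X})$, and the witnesses we produce depend only on $x$.
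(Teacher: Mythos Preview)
Your argument is correct and matches the paper's proof exactly: both ignore $A$, use computable discreteness to obtain $\{x\} \in \mathcal{O}(\mathbf{X})$ and computable Hausdorffness to obtain $\mathbf{X} \setminus \{x\} \in \mathcal{O}(\mathbf{X})$, and return this pair. Your write-up simply spells out the currying step that the paper leaves implicit.
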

\begin{proof}
Given the input $(x,A)$, we ignore $A$ and return the pair $\{x\}$ and $\mathbf{X} \setminus \{x\}$. The former we obtain by computable discreteness of $\mathbf{X}$, the latter by computable Hausdorffness.
\end{proof}

\begin{definition}
We call $\mathbf{X}$ computably normal iff the map $\operatorname{Norm} : \subseteq \mathcal{A}(\mathbf{X})^2 \mto \mathcal{O}(\mathbf{X})^2$ with $(A,B) \in \dom(\operatorname{Norm})$ iff $A \cap B = \emptyset$ and $(U,V) \in \operatorname{Norm}(A,B)$ iff $U \cap V = \emptyset$ and $A \subseteq U$ and $B \subseteq V$ is computable.
\end{definition}

\begin{definition}
We call $\mathbf{X}$ computably hereditarily normal iff the map $\operatorname{HeNorm} : \mathcal{A}(\mathbf{X})^2 \mto \mathcal{O}(\mathbf{X})^2$ with $(U,V) \in \operatorname{HeNorm}(A,B)$ iff $U \cap V = \emptyset$ and $A \setminus B \subseteq U$ and $B \setminus A \subseteq V$ is computable.
\end{definition}

The following alternate characterization might illuminate why we call this notion ``computably hereditarily normal''; as it is equivalent to saying that every open subspace of a computably hereditarily normal is computably normal in a uniform way:

\begin{proposition}[\footnote{This result was suggested to us by an anonymous referee.}]
The following are equivalent for a represented space $\mathbf{X}$:
\begin{enumerate}
\item $\mathbf{X}$ is computably hereditarily normal.
\item The map $\operatorname{NormSub} :\subseteq \mathcal{O}(\mathbf{X}) \times \mathcal{A}(\mathbf{X}) \times \mathcal{A}(\mathbf{X}) \mto \mathcal{O}(\mathbf{X})^2$ is computable, where $(Y,A,B) \in \dom(\operatorname{NormSub})$ iff $Y \cap A \cap B = \emptyset$ and $(U,V) \in \operatorname{NormSub}(Y,A,B)$ if $(Y \cap A) \subseteq U$ and $(Y \cap B) \subseteq V$ and $Y \cap U \cap V = \emptyset$.
\end{enumerate}
\begin{proof}
To show that $(1) \Rightarrow (2)$, we observe that if $(Y,A,B)$ in an instance of $\operatorname{NormSub}$, then any solution $(U,V) \in \operatorname{HeNorm}(A,B)$ already satisfies $(U,V) \in \operatorname{NormSub}(Y,A,B)$.

For the other direction, if we are given some instance $(A,B)$ of $\operatorname{HeNorm}$, we can compute $Y := X \setminus (A \cap B) \in \mathcal{O}(\mathbf{X})$, and find that $(Y,A,B)$ is a valid input for $\operatorname{NormSub}$. If $(U,V) \in \operatorname{NormSub}(Y,A,B)$, then $(Y \cap U,Y \cap V) \in \operatorname{HeNorm}(A,B)$.
\end{proof}
\end{proposition}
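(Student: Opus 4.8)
The plan is to prove the two implications separately, in each case turning a solution of one problem into a solution of the other by elementary set-theoretic bookkeeping together with the basic closure properties of $\mathcal{O}(\mathbf{X})$: finite unions and intersections of open sets are computable, and the complement identification between $\mathcal{A}(\mathbf{X})$ and $\mathcal{O}(\mathbf{X})$ lets me pass freely between a closed set and the open set presenting it. No deeper computability input should be needed.

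For $(1) \Rightarrow (2)$, given an instance $(Y,A,B)$ of $\operatorname{NormSub}$ with $Y \cap A \cap B = \emptyset$, I would simply discard $Y$ and run the assumed realiser of $\operatorname{HeNorm}$ on $(A,B)$, obtaining $(U,V)$ with $U \cap V = \emptyset$, $A \setminus B \subseteq U$ and $B \setminus A \subseteq V$. I then claim this very pair already solves $\operatorname{NormSub}(Y,A,B)$. The disjointness $Y \cap U \cap V = \emptyset$ is immediate from $U \cap V = \emptyset$, and for the containments I note that any $x \in Y \cap A$ must satisfy $x \notin B$ (otherwise $x \in Y \cap A \cap B = \emptyset$), hence $x \in A \setminus B \subseteq U$; the symmetric argument gives $Y \cap B \subseteq V$. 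Since the reduction only reads $A,B$ off the input and then applies a computable map, $\operatorname{NormSub}$ is computable.

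For $(2) \Rightarrow (1)$, given an arbitrary instance $(A,B)$ of $\operatorname{HeNorm}$, the key move is to set $Y := \mathbf{X} \setminus (A \cap B) = (\mathbf{X} \setminus A) \cup (\mathbf{X} \setminus B)$, which is computable as an open set from $(A,B)$ because the two complements are exactly the $\mathcal{O}(\mathbf{X})$-data presenting $A$ and $B$ and union of opens is computable. By construction $Y \cap A \cap B = \emptyset$, so $(Y,A,B)$ is a legal input for $\operatorname{NormSub}$; its realiser yields $(U',V')$ with $Y \cap A \subseteq U'$, $Y \cap B \subseteq V'$ and $Y \cap U' \cap V' = \emptyset$. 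I would then output $(U,V) := (Y \cap U', Y \cap V')$, again computable by intersection of opens. The decisive identities are $Y \cap A = A \setminus B$ and $Y \cap B = B \setminus A$, together with $A \setminus B \subseteq Y$ and $B \setminus A \subseteq Y$; these give $A \setminus B = Y \cap A \subseteq U'$ and hence $A \setminus B \subseteq Y \cap U' = U$, symmetrically $B \setminus A \subseteq V$, while disjointness follows from $U \cap V = Y \cap U' \cap V' = \emptyset$.

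I do not anticipate any serious obstacle, since both reductions are total on the relevant domains and use only computable set operations. The one subtlety worth flagging is in $(2) \Rightarrow (1)$: one must remember to intersect the $\operatorname{NormSub}$ output with $Y$ before returning it, since $\operatorname{NormSub}$ only guarantees disjointness \emph{inside} $Y$, whereas $\operatorname{HeNorm}$ demands the global disjointness $U \cap V = \emptyset$. Omitting this restriction would break the reduction.
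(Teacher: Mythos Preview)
Your proposal is correct and follows exactly the same approach as the paper's proof: for $(1)\Rightarrow(2)$ you apply $\operatorname{HeNorm}$ to $(A,B)$ and observe the result already works, and for $(2)\Rightarrow(1)$ you set $Y = \mathbf{X}\setminus(A\cap B)$, apply $\operatorname{NormSub}$, and intersect the output with $Y$. Your write-up simply spells out the set-theoretic verifications and the computability of $Y$ in more detail than the paper does, and your closing remark about the necessity of intersecting with $Y$ is a helpful clarification.
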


If we want to consider more general subspaces, we need to restrict our attention to effectively countably based spaces (as otherwise subspaces may not be well-behaved).

\begin{proposition}
\label{prop:hereditarilynormal}
If $\mathbf{Y}$ is effectively countably based, computably admissible, and computably hereditarily normal, then every subspace of $\mathbf{Y}$ is computably hereditarily normal.
\end{proposition}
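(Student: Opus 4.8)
The plan is to solve the separation problem upstairs in $\mathbf{Y}$ and transport the solution back down to the subspace $\mathbf{X} \subseteq \mathbf{Y}$. Concretely, given an instance $(A,B) \in \mathcal{A}(\mathbf{X})^2$ of $\operatorname{HeNorm}$, I would first lift $A$ and $B$ to closed sets $\tilde{A},\tilde{B} \in \mathcal{A}(\mathbf{Y})$ with $\tilde{A} \cap X = A$ and $\tilde{B} \cap X = B$, then apply the computable hereditary normality of $\mathbf{Y}$ to $(\tilde{A},\tilde{B})$, and finally intersect the resulting open sets with $X$. The two tools that make this go through are Proposition~\ref{Proposition: extending open sets in ccb spaces} (for the lifting) and the fact that $\operatorname{HeNorm}$ has \emph{total} domain (so that we may apply it to $\tilde{A},\tilde{B}$ regardless of whether they overlap).

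For the lifting step, recall that a closed set $A \in \mathcal{A}(\mathbf{X})$ is presented by its complement $X \setminus A \in \mathcal{O}(\mathbf{X})$. Since $\mathbf{Y}$ is effectively countably based and computably admissible, Proposition~\ref{Proposition: extending open sets in ccb spaces} lets us compute, multi-valuedly, an open set $W_A \in \mathcal{O}(\mathbf{Y})$ with $W_A \cap X = X \setminus A$. Setting $\tilde{A} := \mathbf{Y} \setminus W_A \in \mathcal{A}(\mathbf{Y})$ then yields $\tilde{A} \cap X = X \setminus W_A = X \setminus (X \setminus A) = A$, as required; likewise we obtain $\tilde{B}$ with $\tilde{B} \cap X = B$. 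The choices of $\tilde{A},\tilde{B}$ are non-unique, but any valid choice will do, which is harmless since we are computing a multi-valued map.

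Now apply computable hereditary normality of $\mathbf{Y}$ to $(\tilde{A},\tilde{B})$ to obtain $U,V \in \mathcal{O}(\mathbf{Y})$ with $U \cap V = \emptyset$, $\tilde{A}\setminus\tilde{B} \subseteq U$, and $\tilde{B}\setminus\tilde{A} \subseteq V$. Put $U' := U \cap X$ and $V' := V \cap X$; restriction along the computable inclusion $\mathbf{X} \hookrightarrow \mathbf{Y}$ is computable, so $U',V' \in \mathcal{O}(\mathbf{X})$ are obtained effectively. Then $U' \cap V' = (U \cap V) \cap X = \emptyset$, and using $\tilde{A}\cap X = A$, $\tilde{B}\cap X = B$ one checks the set identity $A \setminus B = (\tilde{A}\setminus\tilde{B}) \cap X \subseteq U \cap X = U'$, and symmetrically $B \setminus A \subseteq V'$. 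Hence $(U',V') \in \operatorname{HeNorm}(A,B)$ in $\mathbf{X}$, and the whole procedure is computable.

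The step I expect to carry the real weight is the lifting of closed sets, which rests entirely on the extension property of open sets guaranteed by Proposition~\ref{Proposition: extending open sets in ccb spaces}; this is exactly where the hypotheses of effective countable basedness and computable admissibility are used, and without them subspaces need not even carry the subspace topology. A secondary point worth flagging is that the possible overlap $\tilde{A}\cap\tilde{B}$ (which can be non-empty even when $A \cap B = \emptyset$) is absorbed precisely because $\operatorname{HeNorm}$ is defined on all of $\mathcal{A}(\mathbf{Y})^2$; this is the structural reason why hereditary normality, rather than plain normality, descends to subspaces.
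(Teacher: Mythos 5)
Your proof is correct and follows essentially the same route as the paper's: lift the closed sets to $\mathbf{Y}$ via the right-inverse of restriction furnished by Proposition~\ref{Proposition: extending open sets in ccb spaces}, apply $\operatorname{HeNorm}$ in $\mathbf{Y}$, and restrict the resulting opens back to $\mathbf{X}$ --- exactly the composite $(r \times r) \circ \operatorname{HeNorm} \circ (s \times s)$ the paper writes down. You merely spell out the set-theoretic verifications (and the role of $\operatorname{HeNorm}$'s total domain) that the paper dismisses as straightforward.
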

\begin{proof}
    Let $\mathbf{X}$ be a subspace of $\mathbf{Y}$.
It follows immediately from Proposition \ref{Proposition: extending open sets in ccb spaces} that the restriction map
$\A(\mathbf{Y}) \to \A(\mathbf{X}), A \mapsto A \cap X$
has a computable multi-valued right-inverse $s \colon \A(\mathbf{X}) \rightrightarrows \A(\mathbf{Y})$.
Letting $r \colon \O(\mathbf{Y}) \to \O(\mathbf{X}), r(U) = U \cap X$ denote the restriction map on open sets,
it is straight-forward to check that the multi-valued map
\[
  {(r \times r) \circ {\operatorname{HeNorm}} \circ {(s \times s)}} ~~ \colon ~~ {\mathcal{A}(\mathbf{X})^2 \mto \mathcal{O}(\mathbf{X})^2}
\]
witnesses that $\mathbf{X}$ is effectively hereditarily normal.
\end{proof}

\begin{proposition}
\label{prop:hilbertcubehereditarilynormal}
$[0,1]^\omega$ is effectively hereditarily normal.
\end{proposition}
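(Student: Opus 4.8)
The plan is to reduce the claim to the familiar ``staircase'' construction that proves hereditary normality of second-countable regular spaces, and to observe that every step of it can be carried out effectively for $[0,1]^\omega$ because this space is a computably compact computable metric space. Write $X = [0,1]^\omega$, fix its metric $d$ and a computable enumeration $(\beta_i)_i$ of basic open balls $\beta_i = \{x : d(x,c_i) < r_i\}$ with rational radii and centres $c_i$ from the dense sequence, and write $\hat\beta_i = \{x : d(x,c_i) \le r_i\}$ for the corresponding closed ball. Given $(A,B) \in \mathcal{A}(X)^2$, we are handed the open complements $U_A = X\setminus A$ and $U_B = X\setminus B$. Note that $A\setminus B$ and $B\setminus A$ are separated, since $\overline{A\setminus B}\subseteq A$ is disjoint from $B\setminus A$, and symmetrically.

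First I would enumerate two families of basic balls. Using that $X$ is computably compact and that the $\hat\beta_i$ are (uniformly) computably compact, the relation $\hat\beta_i \subseteq U_B$ is semidecidable (containment of a computable compact set in a given open set); let $(W_n)_n$ enumerate exactly those $\beta_i$ with $\hat\beta_i \subseteq U_B$, and write $\hat W_n$ for the corresponding closed balls. Symmetrically let $(G_m)_m$ enumerate those $\beta_i$ with $\hat\beta_i \subseteq U_A$, with closed balls $\hat G_m$. Two facts are then immediate: each $\hat W_n$ is disjoint from $B$ (hence from $B\setminus A$) and each $\hat G_m$ is disjoint from $A$ (hence from $A\setminus B$); and since every point of the open set $U_B$ has a basic closed ball around it contained in $U_B$, the family $(W_n)_n$ covers $U_B \supseteq A\setminus B$, and dually $(G_m)_m$ covers $U_A \supseteq B\setminus A$.

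Next I would set
\[ U \;=\; \bigcup_{n}\Bigl(W_n \cap \bigcap_{k\le n}(X\setminus \hat G_k)\Bigr), \qquad V \;=\; \bigcup_{m}\Bigl(G_m \cap \bigcap_{k\le m}(X\setminus \hat W_k)\Bigr), \]
and check that $(U,V)\in\operatorname{HeNorm}(A,B)$. Each $X\setminus\hat G_k=\{x:d(x,c)>r\}$ and each $W_n$ is computably open, so $U$ and $V$ are computable open sets, uniformly in the names of $A$ and $B$; thus the whole map $(A,B)\mapsto (U,V)$ is computable. For $A\setminus B\subseteq U$: a point $x\in A\setminus B$ lies in some $W_{n_0}$ and lies in $A$, so $x\notin\hat G_k$ for every $k$, whence $x\in U$; the inclusion $B\setminus A\subseteq V$ is symmetric. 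For disjointness, if $x\in U\cap V$ there are $n,m$ with $x\in W_n$ and $x\notin\hat G_k$ for $k\le n$, and $x\in G_m$ and $x\notin\hat W_k$ for $k\le m$; taking the smaller of $n$ and $m$ places $x$ both inside and outside one of $\hat W_n$ or $\hat G_m$, a contradiction.

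The step I expect to be the real obstacle is choosing the two families correctly, and this is where the naive approach breaks down. One is tempted to separate $A\setminus B$ and $B\setminus A$ by the Urysohn-style sets $\{d(\cdot,A)<d(\cdot,B)\}$ and $\{d(\cdot,B)<d(\cdot,A)\}$; but a closed set presented only through its open complement, without overtness, yields a distance function that is merely lower semicomputable (one can confirm $d(x,A)>q$ by checking $\hat\beta\subseteq U_A$, but never $d(x,A)<q$), so the comparison is not semidecidable. The covering construction circumvents this precisely because the only test it uses, namely ``a closed ball is contained in a given open complement'', \emph{is} semidecidable by compactness. The one technical point worth verifying carefully is the uniform computable compactness of the closed balls $\hat\beta_i$ (equivalently, the semidecidability of $\hat\beta_i\subseteq U$ for given open $U$), which holds in the computably compact space $[0,1]^\omega$; everything else is the classical argument made effective.
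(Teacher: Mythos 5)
Your proof is correct and takes essentially the same route as the paper's own: you enumerate the closed rational balls contained in $X\setminus B$ and $X\setminus A$ respectively and form the same ``staircase'' union, with your symmetric indexing ($k\le n$ on both sides) working just as well as the paper's asymmetric choice of $k<n$ versus $k\le m$. The only presentational difference is that you make explicit why the enumerations are computable (semidecidability of $\hat\beta_i\subseteq U$ via computable compactness of closed balls), a step the paper states without elaboration.
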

\begin{proof}
Given $A,B \in \A\left([0,1]^{\omega}\right)$ we can (non-deterministically) compute an enumeration $(\widehat{V}_n)_n$ of all closed balls with rational radius and centre that are contained in $[0,1]^\omega \setminus A$, and an enumeration $(\widehat{U}_n)_n$ of all such closed balls that are contained in $[0,1]^\omega \setminus B$.

For a closed ball $\widehat{B}$, let $B$ denote the corresponding open ball.
Let
\[
    U = \bigcup_{n \in \mathbb{N}} \left(U_n \setminus \bigcup_{k < n} \widehat{V}_k\right)
\qquad
\text{and}
\qquad
V = \bigcup_{m \in \mathbb{N}} \left(V_m \setminus \bigcup_{k \leq m} \widehat{U}_k\right).
\]
Clearly, $U$ and $V$ are uniformly computable in $A$ and $B$.

We claim that $U$ and $V$ are disjoint.
Consider a set of the form $U_n \setminus \bigcup_{k < n} \widehat{V}_k$.
By construction, this set is disjoint from
$V_m \setminus \bigcup_{k \leq m} \widehat{U}_k$
for $m \geq n$.
Again by construction, it is disjoint from
$V_m$ for $m < n$, so it is a fortiori disjoint from
$V_m \setminus \bigcup_{k \leq m} \widehat{U}_k$.
This shows that $U$ and $V$ are disjoint.

Now, let $x \in A \setminus B$.
Since $x \notin B$ and $(\widehat{U}_n)_n$ enumerates \emph{all} closed rational balls that are contained in $[0,1]^\omega \setminus B$, we have $x \in U_n$ for some $n$.
Since $x \in A$ we have $x \notin \widehat{V}_k$ for all $k$.
So $x \in U_n \setminus \bigcup_{k < n} \widehat{V}_k$ and hence $x \in U$.
Thus, $U \supseteq A \setminus B$.
An analogous argument establishes $V \supseteq B \setminus A$.
\end{proof}

\section{Computably discrete computably Quasi-Polish spaces}
\label{sec:compquasipolish}
\manlio{I guess we should either say computably discrete computably qP, or just say discrete qP. }
\eike{My take on this: exempt section headings from our convention to drop ``computably'' and add comptuably discrete etc. everywhere.}

\greybox{To avoid drowning the text in occurrences of the word \emph{computably}, we adopt the convention that from this point onwards, ``discrete'', ``Hausdorff'', ``admissible'', ``overt'', ``fibre-overt'', and ``isomorphic'' all refer to the computable version, and use the modifier \emph{classical} to identify the rare cases where we do not mean the computable version.}

\manlio{Oh ok, I didn't realize that we're not using the convention for QP. Where do we mention this? At a glance, it seems that the background is written without this convention.  }
\eike{The idea is that the convention starts just after this section. But we should not extend the convention to QP because we want to distinguish computably QP from precomputably QP}
\manlio{I don't mind the box but I would have been happy with less, like a remark environment :) Should this go here or at the end of the background?}
We start our investigation by looking at discrete computably Quasi-Polish spaces. Computably Quasi-Polish spaces tend to be a setting where everything in computable topology works out very nicely. They are the computable version of Quasi-Polish spaces \cite{debrecht6} proposed in \cite{stull,paulydebrecht4}. Indeed, we can obtain several characterizations. We first start with the following proposition.

\begin{proposition}
    \label{prop:discrete_quotient}
    A quotient of $\mathbb{N}$ by an equivalence relation $R$ is admissible iff it is discrete.
\end{proposition}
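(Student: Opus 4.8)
The plan is to prove the two implications separately, unwinding both notions for the quotient space $\mathbb{N}/R$. Recall that a name of a point $[n] \in \mathbb{N}/R$ is (a name of) any $k \in \mathbb{N}$ with $k \mathrel{R} n$, that the open subsets of $\mathbb{N}/R$ are exactly the $R$-saturated sets whose preimage in $\mathbb{N}$ is enumerable, and that a name of such an open $U$ amounts to an enumeration of $\widehat{U} = \{k : [k] \in U\}$, so that from a name of $U$ and a number $n$ one semidecides $[n] \in U$ by waiting for $n$ to appear. Under these identifications, discreteness of $\mathbb{N}/R$ is precisely the statement that $R$ is computably enumerable (equality of classes is semidecidable), while admissibility is the statement that the canonical map $x \mapsto \{U \in \O(\mathbb{N}/R) : x \in U\}$ from $\mathbb{N}/R$ to $\O(\O(\mathbb{N}/R))$ has a computable partial inverse $r$. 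The pivotal object in both directions is the family of singletons $\{[n]\}$.

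For the direction from discrete to admissible, the point is that when $R$ is c.e. the singleton $\{[n]\}$ is a computable open set, uniformly in $n$, since its preimage $\{k : k \mathrel{R} n\}$ is enumerable uniformly in $n$. Given the neighbourhood filter $\mathcal{N}_x$ of a point $x = [n]$ as an element of $\O(\O(\mathbb{N}/R))$, I would feed, for every $k$ in parallel, the open $\{[k]\}$ to $\mathcal{N}_x$; the filter answers $\top$ on $\{[k]\}$ exactly when $x \in \{[k]\}$, i.e.\ when $[k] = [n]$. Dovetailing over all $k$, this search halts on some $k_0$ with $[k_0] = x$ (it certainly succeeds for $k_0 = n$), and $k_0$ is then a name of $x$. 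This computes the required inverse, so $\mathbb{N}/R$ is admissible.

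For the converse, assume a computable realiser of the inverse $r$ is given, and let me semidecide whether $[n] = [m]$. I would run $r$ on the canonical name of the neighbourhood filter of $[n]$ (computable from $n$, since the canonical map is computable) and read off the first coordinate $k_0$ of the output; correctness of $r$ gives $k_0 \mathrel{R} n$. By continuity, $r$ commits to $k_0$ after consuming only a finite amount of its oracle, which, because filters carry only positive ($\mathbb{S}$-valued) information, amounts to confirming that finitely many opens $U_1, \dots, U_j$ lie in the filter, that is, $[n] \in U_i$ for each $i$. I would then semidecide, in parallel, whether $[m] \in U_i$ for all $i$; if this succeeds, I accept. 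The correctness of this test is the heart of the argument: if all $U_i$ also contain $[m]$, then a name of the neighbourhood filter of $[m]$ that first replays the same confirmations $U_1, \dots, U_j$ is a legitimate name, and on it $r$ must again output $k_0$, so correctness of $r$ forces $k_0 \mathrel{R} m$; together with $k_0 \mathrel{R} n$ this yields $n \mathrel{R} m$. Conversely, if $n \mathrel{R} m$ then the two filters coincide and the check $[m] \in U_i$ trivially succeeds. Hence $R$ is c.e.

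The step I expect to require the most care is the ``replay'' in the last paragraph: rigorously, it is a use-principle argument for the realiser of $r$, and one must argue that the only information $r$ can have extracted from the filter of $[n]$ before committing to $k_0$ is a finite list of opens confirmed to be present, each of which is genuinely contained in the filter of $[m]$ precisely when $[m] \in U_i$. This relies on opens and filters being represented purely positively, so that agreeing on a finite positive prefix is exactly agreeing on finitely many ``open present'' facts; I would make this explicit by working with the standard $\mathbb{S}$-based representations of $\O(\mathbb{N}/R)$ and $\O(\O(\mathbb{N}/R))$.
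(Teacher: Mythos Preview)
Your discrete $\Rightarrow$ admissible direction is correct and essentially identical to the paper's argument: both compute the singletons $\{[k]\}$ uniformly and search through the given neighbourhood filter for one that is accepted.

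For admissible $\Rightarrow$ discrete, your approach diverges from the paper's, and the step you yourself flag as delicate is a genuine gap. In the function-space representation of $\O(\O(\mathbb{N}/R))$ used here, a name of $\mathcal{N}_{[n]}$ is a code for a realiser of its characteristic function; a finite prefix of such a code is \emph{not} a list of opens $U_1,\dots,U_j$ that have been ``confirmed present''. Even if one passes to the equivalent enumeration-of-basic-opens representation, a finite prefix records basic opens of $\O(\mathbb{N}/R)$, that is, conditions of the form ``$\widehat{U}$ contains the finite set $F$''. Checking that such a condition also holds of $\mathcal{N}_{[m]}$ amounts to asking whether some $a\in F$ satisfies $a\,R\,m$, which is exactly the semidecidability of $R$ you are trying to prove. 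So the ``extract the $U_i$ and test $[m]\in U_i$'' plan is circular as stated, and the fix you gesture at (``work with the standard $\mathbb{S}$-based representations'') does not resolve it.

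The paper sidesteps this neatly: instead of running $\Phi$ on $\mathcal{N}_{[n]}$ and then trying to transfer information to $[m]$, it computes directly the \emph{intersection}
\[
\mathcal{F}=\{U\in\O(\mathbb{N}/R): [n]\in U \text{ and } [m]\in U\}\in\O(\O(\mathbb{N}/R))
\]
from $n$ and $m$, feeds $\mathcal{F}$ to $\Phi$, and accepts if $\Phi$ ever produces output. If $[n]=[m]$ then $\mathcal{F}$ is a genuine neighbourhood filter and $\Phi$ must output. Conversely, because $\mathcal{F}\subseteq\mathcal{N}_{[n]}$ and $\mathcal{F}\subseteq\mathcal{N}_{[m]}$, any finite prefix of a name of $\mathcal{F}$ extends to a name of each filter (this is the monotonicity you invoke, but used in the easy direction: from a smaller $\mathbb{S}$-valued function to a larger one), so any output $\Phi$ commits to must equal both $[n]$ and $[m]$. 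No extraction of intermediate opens is needed, and no circularity arises.
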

\begin{proof}
    For the left-to-right implication, we first observe that, given $n, m \in \mathbb{N}$, we can compute a name for
    \[ \mathcal{F} = \{U \in \mathcal{O}(\mathbb{N}/R) \mid [n]_R \in U \land [m]_R \in U\}\in \mathcal{O}(\mathcal{O}(\mathbb{N}/R)).\]
    Let $\Phi$ be a computable functional witnessing the admissibility of $\mathbb{N}/R$. It is not hard to check that $\Phi(\mathcal{F})$ produces an output iff $[n]_R = [m]_R$. Indeed, if $x := [n]_R = [m]_R$, then $\mathcal{F}$ is the neighbourhood filter of $x$, hence $\Phi(\mathcal{F})=x$. Conversely, assume that $\Phi(\mathcal{F})$ commits to some $y$. Since any prefix of a name for $\mathcal{F}$ can be extended to a name for a neighbourhood filter of $[n]_R$ or $[m]_R$, the admissibility of $\mathbb{N}/R$ implies that $y = [n]_R = [m]_R$. This proves that $\mathbb{N}/R$ is discrete.

    For the converse direction, assume that $\mathbb{N}/R$ is discrete. In particular, for every $m\in \mathbb{N}$, we can compute $U_m:=\{ [m]_R \}\in \mathcal{O}(\mathbb{N}/R)$. Given $\mathcal{U}:=\{ U \in \mathcal{O}(\mathbb{N}/R) \mid [n]_R \in U\}\in \mathcal{O}(\mathcal{O}(\mathbb{N}/R))$, we can computably search for some $m\in \mathbb{N}$ such that $U_m\in \mathcal{U}$. Clearly, any such $m$ is such that $[m]_R=[n]_R$. Since we will eventually find one, this witnesses the admissibility of $\mathbb{N}/R$.
\end{proof}

\begin{theorem}
\label{theo:compquasipolish}
For discrete $\mathbf{X}$, the following are equivalent:
\begin{enumerate}
\item $\mathbf{X}$ admits a computably equivalent representation with domain $\Baire$.
\item There exists a computable surjection $s : \mathbb{N} \to \mathbf{X}$.
\item $\mathbf{X}$ is computably Quasi-Polish. 
\item $\mathbf{X}$ is isomorphic to a discrete quotient of $\mathbb{N}$.
\item $\mathbf{X}$ is isomorphic to an admissible quotient of $\mathbb{N}$.
\end{enumerate}
\end{theorem}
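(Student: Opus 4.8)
The plan is to prove the five statements equivalent through the cycle $(1)\Rightarrow(2)\Rightarrow(4)\Rightarrow(3)\Rightarrow(2)\Rightarrow(1)$, treating $(4)\Leftrightarrow(5)$ separately as an immediate consequence of Proposition~\ref{prop:discrete_quotient} (a quotient of $\mathbb N$ is admissible iff it is discrete). Throughout, the two features of $\mathbf X$ I would exploit are that equality is semidecidable, so that each singleton $\{x\}$ is uniformly open, and, for the computably Quasi-Polish direction, the ideal presentation $\mathbf X\cong\mathbf{I}(\ll)$ with $\ll$ a c.e.\ transitive relation.

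For $(1)\Leftrightarrow(2)$: given a total representation $\delta'\colon\Baire\to\mathbf X$, the preimage $(\delta')^{-1}(\{x\})$ is open and non-empty, hence contains a cylinder $[\sigma]$, so $\delta'(\sigma 0^\omega)=x$; thus $\sigma\mapsto\delta'(\sigma 0^\omega)$ is a computable surjection $\baire\to\mathbf X$, and $\baire$ is computably isomorphic to $\mathbb N$. Conversely a computable surjection $s$ yields the total representation $p\mapsto s(p(0))$, which is computably equivalent to $\delta_\mathbf X$; the non-trivial direction is to computably translate a $\delta_\mathbf X$-name of $x$ into a $\delta'$-name, which I would do by dovetailing the semidecision procedures for $s(n)=x$ over all $n$ and outputting the first success (this halts by surjectivity). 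For $(2)\Rightarrow(4)$ I would set $R=\{(n,m):s(n)=s(m)\}$, which is c.e.\ because $s$ is computable and equality on $\mathbf X$ is semidecidable; the map $[n]_R\mapsto s(n)$ and its inverse (found by the same dovetailing search) witness $\mathbf X\cong\mathbb N/R$, and this quotient is discrete since $\mathbf X$ is.

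For $(4)\Rightarrow(3)$ I would first identify $\mathbb N/R$ with the ideal space $\mathbf{I}(R)$: since $R$ is a c.e.\ equivalence relation it is transitive, its ideals are exactly the $R$-classes, and the two representations are interconvertible using that $R$ is c.e. This exhibits $\mathbb N/R$ as precomputably Quasi-Polish; computable overtness then follows by pulling open sets back along the computable surjection $\mathbb N\to\mathbb N/R$ and using that $\mathbb N$ is computably overt. The heart of the argument is $(3)\Rightarrow(2)$. Writing $\mathbf X\cong\mathbf{I}(\ll)$, I would consider the computable sequence of principal ideals $J_n=\{m:m\ll n\}\cup\{n\}$, each of which is a genuine ideal (directedness and downward-closure follow from transitivity, with $n$ as top element) and a computable point uniformly in $n$. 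The claim is that $n\mapsto J_n$ is already \emph{surjective}.

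The main obstacle, and the step I would be most careful about, is exactly this surjectivity. The principal ideals are topologically dense, since every non-empty basic open $\widehat{n}=\{I:n\in I\}$ contains $J_n$. The key observation is that in a discrete space a dense family must contain \emph{every} point, because each singleton is open: given any ideal $I$, discreteness makes $\{I\}$ open, so $\{I\}=\bigcup_{n\in A}\widehat{n}$, and some $n\in A$ lies in $I$ with $\widehat{n}\subseteq\{I\}$; as $J_n\in\widehat{n}$ this forces $J_n=I$. Transporting along the isomorphism yields the desired computable surjection $\mathbb N\to\mathbf X$. I would also double-check that this argument uses only the precomputably Quasi-Polish structure, so that discreteness \emph{a posteriori} upgrades precomputable to computable Quasi-Polishness — which is consistent with, and explains, the appearance of the overtness hypothesis in $(3)$.
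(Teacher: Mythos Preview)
Most of your cycle is sound and close to the paper's argument --- in particular your $(1)\Leftrightarrow(2)$ and $(2)\Rightarrow(4)$ match the paper, and your $(4)\Rightarrow(3)$ via the identification $\mathbb N/R\cong\mathbf I(R)$ is a correct and pleasant alternative to the paper's route through fibre-overt representations. The gap is in $(3)\Rightarrow(2)$.

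Your sets $J_n=\{m:m\ll n\}\cup\{n\}$ need not be ideals. The paper only assumes $\ll$ transitive, not reflexive, and directedness is required in the strict form ``for all $x,y\in I$ there exists $z\in I$ with $x\ll z$ and $y\ll z$''. Taking $x=y=n$ when $n\not\ll n$, any $z\in J_n$ with $n\ll z$ would satisfy either $z=n$ (so $n\ll n$) or $z\ll n$ (so $n\ll n$ by transitivity), a contradiction either way. Concretely, for ${\ll}={<}$ on $\mathbb N$ the only ideal is $\mathbb N$ itself, $\mathbf I(<)$ is a one-point space, and no $J_n=\{0,\dots,n\}$ is an ideal --- so your map $n\mapsto J_n$ does not even land in $\mathbf X$. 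Your closing remark that the argument ``uses only the precomputably Quasi-Polish structure'' is not just unproved but false: the paper's own example $\mathbb H^c=\{n\in\mathbb N\mid\Phi_n{\uparrow}\}$ is discrete and precomputably Quasi-Polish yet admits no computable surjection from $\mathbb N$ (such a surjection would enumerate a $\Pi^0_1$-complete set), so overtness must genuinely enter the proof of $(3)\Rightarrow(2)$. The paper avoids this whole difficulty by not arguing $(3)\Rightarrow(2)$ directly: it cites the external characterisation that computably Quasi-Polish spaces admit total representations to obtain $(3)\Rightarrow(1)$, and then closes the cycle through $(1)\Rightarrow(2)$.
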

\begin{proof}
$(1 \Rightarrow 2)$:
Let $\delta : \Baire \to \mathbf{X}$ be a total representation for $\mathbf{X}$ and let $\mathrm{isEqual} : \mathbf{X} \times \mathbf{X} \to \mathbb{S}$ be a computable map witnessing the discreteness of $(\mathbf{X},\delta)$. Using a realizer $E : \Baire \times \Baire \to \Cantor$ for $\mathrm{isEqual}$, we can compute a list $(w_i)_{i \in \mathbb{N}}$ of all $w\in\baire$ such that $E$ writes the first $1$ upon reading the pair $(w,w)$. Since $\delta$ is total, for every $p\in\Baire$ we know that $E(p,p)$ contains a $1$. In particular, the sequence $(w_i)_{i \in \mathbb{N}}$ identifies an open cover of $\Baire$. Moreover, the discreteness assumption implies that $\delta$ is constant on each cylinder $w_i\Baire$. As such, we can define a computable surjection $s : \mathbb{N} \to \mathbf{X}$ as the map realized by $i \mapsto w_i0^\omega$.

$(2 \Rightarrow 1)$: Let $\delta_\mathbb{N}$ be a total representation of $\mathbb{N}$. We claim that $\delta_{\mathbf{X}}' := s \circ \delta_\mathbb{N}$ is a total representation of $\mathbf{X}$. The translation from $\delta_{\mathbf{X}}'$ to $\delta_\mathbf{X}$ is realized by a realizer of $s$. For the converse, given a $\delta_\mathbf{X}$-name $p$ we exhaustively generate all $s(n) \in \mathbf{X}$, and use $\mathrm{isEqual}$ to identify some $n$ with $s(n) = \delta_\mathbf{X}(p)$, and pick some $q \in \delta_\mathbb{N}^{-1}(\{n\})$. Then $q$ is a $\delta'_\mathbf{X}$-name for the same point as $q$, given the translation in the other direction.

$(1 \wedge 2 \Rightarrow 3)$: We first show that $2.$ implies $\mathbf{X}$ being admissible. Assume we are given $\{U \in \mathcal{O}(\mathbf{X}) \mid x \in U\} \in \mathcal{O}(\mathcal{O}(\mathbf{X}))$. Using $s$ and discreteness of $\mathbf{X}$, we can generate all $\{s(n)\} \in \mathcal{O}(\mathbf{X})$ for $n \in \mathbb{N}$, and search for some $n$ with $\{s(n)\} \in \{U \in \mathcal{O}(\mathbf{X}) \mid x \in U\}$. Since $s$ is surjective, such an $n$ needs to exist, and it obviously satisfies that $s(n) = x$. As such, we can compute $x$, thus witnessing the admissibility of $\mathbf{X}$.

Next, we observe that the representation $\delta : \Baire \to \mathbf{X}$ from point 1 is fibre-overt. Given a prefix $w$ and a point $x \in \mathbf{X}$ we can search for extensions $p$ of $w$ such that the equality test on $\mathbf{X}$ confirms that $\delta(p) = x$. As $\Baire$ itself is computably overt, this is an effective process and yields the claim. By \cite[Theorem 14]{paulydebrecht4} a total admissible fibre-overt representation characterizes computably Quasi-Polish spaces.
\eike{Potential minor TODO: use the definition of ``effectively fibre-overt'' (or rather, that of overtness) more directly in this argument to not confuse readers who are seeing this for the first time. }

$(3 \Rightarrow 1$): By \cite[Theorem 14]{paulydebrecht4} a computably Quasi-Polish spaces admits a total representation.

$(2 \Rightarrow 4)$: Let $n \cong m$ iff $s(n) = s(m)$. We claim that $\mathbf{X}$ is isomorphic to $\mathbb{N}/\cong$. Clearly, the computable map $\varphi : \mathbb{N}/\cong \to \mathbf{X}$ induced by $s$ is a bijection. Conversely, given $x \in X$ we can use discreteness of $\mathbf{X}$ to (non-deterministically) identify some $n \in \mathbb{N}$ with $s(n) = x$. Hence, $\varphi^{-1}$ is computable, too. The discreteness of $\mathbb{N}/\cong$ follows immediately from the discreteness of $\mathbf{X}$.

$(4 \Rightarrow 2)$: Straight-forward.

$(4 \Leftrightarrow 5)$: This follows by Proposition~\ref{prop:discrete_quotient}.
\end{proof}

We can observe that there is a close connection between discrete Quasi-Polish spaces and the represented spaces of equivalence classes of {computably enumerable equivalence relations} on $\mathbb{N}$ (ceers). In particular, if $R$ is a ceer, we write $\mathbb{N}/R$ for the represented space of $R$-equivalence classes, where each class is represented via any of its representatives. As mentioned in the introduction, ceers exhibit a rich structure and they have received significant attention in recent years.

The equivalence of the points (3), (4), and (5) in the previous theorem can be restated as follows:

\begin{corollary}
    Let $\mathbf{X}$ be a represented space. The following are equivalent:
    \begin{enumerate}
        \item $\mathbf{X}$ is a discrete Quasi-Polish space.
        \item $\mathbf{X}$ there is an equivalence relation $R$ such that $\mathbb{N}/R$ is admissible and $\mathbf{X}$ is isomorphic to $\mathbb{N}/R$.
        \item $\mathbf{X}$ there is a ceer $R$ such that $\mathbf{X}$ is isomorphic to $\mathbb{N}/R$.
    \end{enumerate}
\end{corollary}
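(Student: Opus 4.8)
The plan is to recognise this corollary as essentially a repackaging of the equivalence of items (3), (4) and (5) of Theorem~\ref{theo:compquasipolish}, once we supply the missing dictionary entry translating ``discrete quotient of $\mathbb{N}$'' into ``quotient by a ceer''. The one genuinely new ingredient is the elementary observation that, for an equivalence relation $R$ on $\mathbb{N}$, the quotient $\mathbb{N}/R$ is discrete if and only if $R$ is a ceer. Indeed, in the quotient representation a class $[n]_R$ is named by any of its representatives, so the equality predicate on $\mathbb{N}/R$ is semidecidable precisely when the relation $R = \{(n,m) \mid [n]_R = [m]_R\}$ is computably enumerable, which is the definition of a ceer. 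Combining this with Proposition~\ref{prop:discrete_quotient} yields the chain of equivalences: for an equivalence relation $R$, the quotient $\mathbb{N}/R$ is admissible $\iff$ $\mathbb{N}/R$ is discrete $\iff$ $R$ is a ceer.

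With this dictionary in hand I would first dispatch the equivalence of items (2) and (3) directly, with no appeal to the theorem: given (2), the witnessing relation $R$ has $\mathbb{N}/R$ admissible, hence $R$ is a ceer by the chain above, so the same $R$ witnesses (3); conversely a ceer $R$ witnessing (3) has $\mathbb{N}/R$ discrete and hence admissible, so it witnesses (2). It then remains to connect (1) with (3). For $(1) \Rightarrow (3)$, a discrete Quasi-Polish $\mathbf{X}$ satisfies item (3) of Theorem~\ref{theo:compquasipolish}, hence item (4), so $\mathbf{X}$ is isomorphic to a discrete quotient $\mathbb{N}/R$, and the dictionary makes $R$ a ceer. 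For $(3) \Rightarrow (1)$, a ceer $R$ with $\mathbf{X} \cong \mathbb{N}/R$ exhibits $\mathbf{X}$ as a discrete quotient of $\mathbb{N}$, i.e.\ item (4) of the theorem, whence item (3) of the theorem gives that $\mathbf{X}$ is computably Quasi-Polish; together with discreteness this is exactly condition (1).

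The only point requiring care — and the main ``obstacle'', such as it is — is the bookkeeping around the standing hypothesis ``for discrete $\mathbf{X}$'' in Theorem~\ref{theo:compquasipolish}: since the corollary quantifies over an arbitrary represented space $\mathbf{X}$, one must check that each of (1), (2) and (3) independently entails discreteness of $\mathbf{X}$ before the theorem may legitimately be invoked. Condition (1) asserts discreteness outright, and under (2) or (3) the space $\mathbf{X}$ is isomorphic to a quotient $\mathbb{N}/R$ which is discrete by the dictionary lemma together with Proposition~\ref{prop:discrete_quotient}; discreteness transfers along the isomorphism. Once this is observed, the corollary follows without any further computation.
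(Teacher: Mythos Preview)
Your proposal is correct and matches the paper's approach exactly: the paper presents this corollary without proof, merely as a restatement of the equivalence of items (3), (4) and (5) of Theorem~\ref{theo:compquasipolish}, and your write-up supplies precisely the details that justify this restatement, namely the observation that $\mathbb{N}/R$ is discrete iff $R$ is a ceer, together with the bookkeeping on the standing discreteness hypothesis. There is nothing to add.
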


\begin{proposition}
\label{prop:compquasipolish}
For an infinite discrete computably Quasi-Polish space $\mathbf{X}$ the following are equivalent:
\begin{enumerate}
\item $\mathbf{X} \cong \mathbb{N}$.
\item There is a computable injection $\iota : \mathbf{X} \to \mathbb{N}$.
\item $\mathbf{X}$ is a computable metric space.
\item $\mathbf{X}$ is Hausdorff.
\end{enumerate}
\end{proposition}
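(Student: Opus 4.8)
The plan is to establish the cycle $(1) \Rightarrow (2) \Rightarrow (4) \Rightarrow (1)$ together with the side implications $(1) \Rightarrow (3) \Rightarrow (4)$, which jointly yield all four equivalences. Throughout I would lean on Theorem~\ref{theo:compquasipolish}: since $\mathbf{X}$ is discrete and computably Quasi-Polish, it is isomorphic to a discrete quotient $\mathbb{N}/R$ of $\mathbb{N}$ by a ceer $R$, and in particular there is a computable surjection $s \colon \mathbb{N} \to \mathbf{X}$. This lets me transport statements about $\mathbf{X}$ into statements about the relation $R$.

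I would dispatch the three easy implications first. For $(1) \Rightarrow (2)$, any isomorphism $\mathbf{X} \cong \mathbb{N}$ is in particular a computable injection. For $(1) \Rightarrow (3)$, it suffices to recall that $\mathbb{N}$ itself is a computable metric space (take the discrete metric with $d(n,m) = 1$ for $n \neq m$, a computable pseudometric on $\mathbb{N}$), and that being a computable metric space is preserved under isomorphism. For $(3) \Rightarrow (4)$, I would invoke the fact recorded in the background that every computable metric space is computably Hausdorff. Finally, for $(2) \Rightarrow (4)$, given a computable injection $\iota \colon \mathbf{X} \to \mathbb{N}$, I can decide inequality of $x, y \in \mathbf{X}$ by computing $\iota(x)$ and $\iota(y)$ and testing $\iota(x) \neq \iota(y)$ in $\mathbb{N}$, which is decidable; injectivity makes this correct. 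Hence inequality on $\mathbf{X}$ is semidecidable, i.e., $\mathbf{X}$ is Hausdorff.

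The crux is $(4) \Rightarrow (1)$. Here I would first observe that if $\mathbf{X}$ is both discrete and Hausdorff, then equality on $\mathbf{X}$ is simultaneously semidecidable (discreteness) and co-semidecidable (Hausdorffness), hence decidable. Transporting this along the isomorphism $\mathbf{X} \cong \mathbb{N}/R$ and using that $[n]_R = [m]_R$ iff $n \mathrel{R} m$, I conclude that $R$ is a decidable equivalence relation. Then the set $L = \Set{n \in \mathbb{N}}{\forall m < n, \ \neg(m \mathrel{R} n)}$ of least representatives of $R$-classes is a decidable subset of $\mathbb{N}$, and it is infinite because $\mathbf{X}$ is infinite. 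The map sending a class $[n]_R$ to $\lvert\Set{m \in L}{m < \min [n]_R}\rvert$, the number of classes with a strictly smaller least representative, is a computable bijection $\mathbb{N}/R \to \mathbb{N}$: forward, from a representative $n$ one computes $\min [n]_R$ by a terminating decidable search and counts elements of $L$ below it; backward, from $k \in \mathbb{N}$ one enumerates $L$ in increasing order and returns its $k$-th element as a name. These maps are mutually inverse and both computable, witnessing $\mathbf{X} \cong \mathbb{N}$.

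I expect the only real subtlety to lie in the bookkeeping of this last step rather than in any genuine obstacle: once decidability of $R$ is secured, the identification of $\mathbb{N}/R$ with $\mathbb{N}$ is the standard fact that a decidable equivalence relation with infinitely many classes has a quotient computably isomorphic to $\mathbb{N}$. The infinitude hypothesis on $\mathbf{X}$ is precisely what is needed to reach all of $\mathbb{N}$ rather than a finite initial segment, and the interplay of discreteness and Hausdorffness collapsing a ceer to a decidable relation is the conceptual heart of the argument.
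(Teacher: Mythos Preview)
Your proposal is correct and follows essentially the same route as the paper: the same chain of easy implications, and for $(4)\Rightarrow(1)$ you invoke the computable surjection from Theorem~\ref{theo:compquasipolish}, combine discreteness with Hausdorffness to make equality decidable, and then use the decidable infinite set of least representatives to build the isomorphism with $\mathbb{N}$. The only cosmetic difference is that you phrase everything through the ceer $R$ on $\mathbb{N}$, whereas the paper works directly with the surjection $s$ and the predicate $s(n)=s(m)$; these are the same argument.
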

\begin{proof}
It is immediate that $1$ implies $2$ and that $2$ implies $4$. It is also immediate that $1$ implies $3$ and that $3$ implies $4$. We only need to show that $4$ implies $1$. By Theorem \ref{theo:compquasipolish} we have a computable surjection $s : \mathbb{N} \to \mathbf{X}$. Since $\mathbf{X}$ is discrete and Hausdorff, we find that $s(n) = s(m)$ is decidable for $n,m \in \mathbb{N}$. Consequently, the set $S = \{n \in \mathbb{N} \mid \forall i < n \ s(i) \neq s(n)\}$ is a decidable infinite subset of $\mathbb{N}$, which means there is a (computable) isomorphism $\sigma : \mathbb{N} \to S$, which we can lift to yield an isomorphism between $\mathbf{X}$ and $\mathbb{N}$.
\end{proof}

In computable topology, being discrete does not imply being Hausdorff. But there is even more, and we can exhibit a discrete computably Quasi-Polish space having no decidable non-trivial properties at all. This answers a question posed to us by Emmanuel Rauzy.

\begin{example}
\label{ex:discretenonontrivialdecidable}
There is an infinite discrete computably Quasi-Polish space $\mathbf{X}$ such that every computable $f : \mathbf{X} \to \mathbb{N}$ is constant.
\end{example}
\begin{proof}
We build the space $\mathbf{X}$ from a directed graph $G$ with vertex set $\mathbb{N}$. Each vertex will have out-degree at most $1$. For the vertex $n$, we first wait for confirmation that the $n$-th Turing machine halts on $n$ and outputs some number $a_n$. If this never happens, $n$ will have out-degree $0$. If it does happen, we then search for some $m \neq n$ such that the $n$-th TM halts on $m$ and outputs some $a_m \neq a_n$. If we do find such an $m$, we add an edge $n \mapsto m$ for the first candidate found.

Let $n \equiv m$ if there is an \emph{undirected} path between $n$ and $m$ in $G$. This is a computably enumerable relation. Let $\mathbf{X} = \mathbb{N}/\equiv$. By Theorem \ref{theo:compquasipolish}, this yields a discrete computably Quasi-Polish space.

To see that $\mathbf{X}$ is infinite, we observe that there are infinitely many $n$ such that the $n$-th TM does not halt on $n$. This means that there are infinitely many vertices in $G$ with out-degree $0$, and -- since all vertices in $G$ have out-degree at most $1$ -- no two such vertices can be connected by an undirected path. This means there are infinitely many connected components in the graph that is obtained from $G$ by forgetting edge directions, and thus infinitely many points in $\mathbf{X}$.

For the final claim, assume for the sake of a contradiction that the $n$-th Turing machine computes a non-constant function $f : \mathbf{X} \to \mathbb{N}$. Because $f$ is total, the $n$-th TM must halt on $n$ and output some $a_n$. Because $f$ is not constant, there is some $m$ on which the $n$-th TM outputs some different value $a_m$, which means that there will be an edge from $n$ to one such $m$ in $G$. But that means that $n$ and $m$ denote the same point in $\mathbf{X}$, and thus the $n$-th TM doesn't actually compute a function due to the failure of extensionality.
\end{proof}

Instead of directly constructing a space to witness the claim in Example \ref{ex:discretenonontrivialdecidable}, we could instead have used the connection to ceers we established and import known results from the literature. We start with an easy observation to link topological properties to those commonly studied for ceers:
\begin{observation}
The following are equivalent for a ceer $R$:
\begin{enumerate}
\item Any distinct equivalence classes of $R$ are recursively inseparable.
\item Every computable multi-valued function $F : (\mathbb{N}/R) \mto \mathbf{2}$ has a constant choice function.
\end{enumerate}
\begin{proof}
We prove contrapositives both ways.
Recall that disjoint sets $A, B \subseteq \mathbb{N}$ are called recursively separable if there exists a decidable set $C \subseteq \mathbb{N}$ with $A
\subseteq C$ and $B \subseteq \mathbb{N} \setminus C$. If two $R$-equivalence classes $A$, $B$ are recursively separable, then the witness $C$ gives rise
to a computable realizer of the multi-valued function $F : (\mathbb{N}/R) \mto \mathbf{2}$ where $F(A) = 0$, $F(B) = 1$ and $F(X) = \{0,1\}$ for
$X \in(\mathbb{N}/R) \setminus \{A, B\}$, which clearly has no constant choice functions. Conversely, if  $F : (\mathbb{N}/R) \mto \mathbf{2}$ has no constant choice function, there must be some $A \in (\mathbb{N}/R)$ with $1 \notin F(A)$ and some $B \in (\mathbb{N}/R)$ with $0 \notin F(B)$. If $F$ has a computable realizer, then this realizer witnesses that $A$ and $B$ are recursively separable.
\end{proof}
\end{observation}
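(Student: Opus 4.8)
The plan is to prove both implications by contraposition, resting on a single structural observation: a computable realiser of a map into the discrete two-point space $\mathbf{2}$ is essentially a decidable predicate, since from a name of a point of $\mathbf{2}$ one can computably read off whether it is $0$ or $1$. Recall that a class $[n]_R$ of $\mathbb{N}/R$ is named simply by any of its representatives, so a realiser of $F \colon (\mathbb{N}/R) \mto \mathbf{2}$ is fed a representative $n \in \N$ and must commit to a bit lying in $F([n]_R)$. I will unpack ``constant choice function'' as: there is a fixed $c \in \{0,1\}$ with $c \in F(X)$ for every class $X$, equivalently $\bigcap_X F(X) \neq \emptyset$.

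For $(1) \Rightarrow (2)$ I would establish $\neg(2) \Rightarrow \neg(1)$. Suppose $F$ is computable but has no constant choice function. Then $0 \notin \bigcap_X F(X)$ and $1 \notin \bigcap_X F(X)$, so there are distinct classes $A, B$ with $F(A) = \{1\}$ and $F(B) = \{0\}$. Fix a computable realiser $\rho$ of $F$ and define $c \colon \N \to \{0,1\}$ by letting $c(n)$ be the bit named by $\rho$ on a canonical name of $n$; this is total and computable because $\rho$ halts on every valid input and $\mathbf{2}$ is discrete. Since $\rho$ realises $F$, we have $c(n) \in F([n]_R)$ for all $n$, so every representative of $A$ is mapped to $1$ and every representative of $B$ to $0$. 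Hence $C := c^{-1}(1)$ is a decidable set with $A \subseteq C$ and $B \subseteq \N \setminus C$, witnessing that $A$ and $B$ are recursively separable, which negates $(1)$.

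For $(2) \Rightarrow (1)$ I would establish $\neg(1) \Rightarrow \neg(2)$. Suppose distinct classes $A, B$ are recursively separable via a decidable $C$ with $A \subseteq C$ and $B \subseteq \N \setminus C$. Define $F \colon (\mathbb{N}/R) \mto \mathbf{2}$ by $F(A) = \{1\}$, $F(B) = \{0\}$, and $F(X) = \{0,1\}$ otherwise; it is realised by the computable map $n \mapsto \chi_C(n)$, since $n \in A \subseteq C$ yields output $1 \in F(A)$, $n \in B \subseteq \N \setminus C$ yields $0 \in F(B)$, and any other output lies in $\{0,1\}$. This $F$ admits no constant choice function, as the constant $0$ fails on $A$ and the constant $1$ fails on $B$, negating $(2)$.

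The only genuinely delicate point is the extraction of the computable bit-valued function $c$ from the realiser $\rho$ in the first direction: one must use that $\mathbf{2}$ is discrete, so that a finite prefix of $\rho$'s output already determines the bit, and that well-definedness of $F$ on classes guarantees the read-off value lands in $F([n]_R)$ irrespective of which representative was supplied. Everything else is routine bookkeeping with the representation of $\mathbb{N}/R$ by representatives.
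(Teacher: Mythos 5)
Your proof is correct and takes essentially the same route as the paper: both directions by contraposition, turning a decidable separator $C$ into a realiser of the multifunction $F$ with $F(A)$, $F(B)$ singletons and $F(X)=\{0,1\}$ elsewhere, and conversely reading a decidable separating set off a realiser of an $F$ without constant choice. Your extra care in extracting the total computable bit-valued function $c$ from the realiser (using discreteness of $\mathbf{2}$ and extensionality on representatives) merely spells out what the paper leaves implicit in ``this realizer witnesses that $A$ and $B$ are recursively separable.''
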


Ceers whose equivalence classes are not only recursively inseparable, but are so in an
effective or uniformly effective way have been constructed and studied previously, see e.g. \cite{almnss} and Proposition 3.13 therein
(the examples built there are far more complicated as they are meant to satisfy much more specific properties).

\section{Computably discrete precomputably Quasi-Polish spaces}
\label{sec:discrete_precompQP}
The difference between a computably Quasi-Polish space and a precomputably Quasi-Polish is that the former is required to be overt, but the latter might not be. It is straight-forward to come up with separating example:

\begin{example}[Discrete, Hausdorff, admissible, not overt]
The complement of the Halting problem $\mathbb{H}^c = \{n \in \mathbb{N} \mid \Phi_n \uparrow\}$ is an infinite Hausdorff discrete precomputably Quasi-Polish space which is not overt.
\end{example}

With some more work we can see that the equivalence of being Hausdorff and admitting a computable injection into $\mathbb{N}$ we established for discrete computably Quasi-Polish spaces does not extend to precomputably Quasi-Polish spaces:

\begin{example}
There is a discrete Hausdorff precomputably Quasi-Polish space $\mathbf{X}$ such that there is no computable injection $\iota : \mathbf{X} \to \mathbb{N}$.
\begin{proof}
Given $n \in \mathbb{N}$, we can uniformly build a discrete Hausdorff precomputably Quasi-Polish space $\mathbf{X}_n$ such that the $n$-th computable
function does not realize an injection $\iota_n : \mathbf{X}_n \to \mathbb{N}$. Then $\sum_{n \in \mathbb{N}} \mathbf{X}_n$ is the desired example. The
space $\mathbf{X}_n$ could be the singleton $\{\mathbb{N}\} \subseteq \mathcal{O}(\mathbb{N})$, or it could be $\{I,\mathbb{N} \setminus I\} \subseteq
\mathcal{O}(\mathbb{N})$ where $I$ is a finite set given via a 2-c.e.~code\footnote{Recall that a $2$-c.e.\ set is the difference of two computably
enumerable (c.e.)sets, see e.g.\ \cite[Sec.\ 3.8.4]{Soare16}. A $2$-c.e.\ set can be represented with a pair of names for c.e.\ sets.}.
We also built a witness for computable Hausdorffness. The construction proceeds in phases:

\begin{description}
\item[Phase 1] We search for some finite set $J$ such that $\Phi_n$ outputs some $m \in \mathbb{N}$ upon seeing some enumeration of $J$. As long as we have not yet found one, $\mathbf{X}_n$ is  $\{\mathbb{N}\}$ and the witness of computable Hausdorffness is trivial. If we do find such a $J$, we proceed to Phase 2.
\item[Phase 2] We state that $I := \{0,1,\ldots,\max J + 1\}$, and add to the witness of computable Hausdorffness that enumerations containing all of $\{0,1,\ldots,\max J + 1\}$ refer to different points than enumerations containing $\max J + 2$. We search for some finite set $K \subseteq \mathbb{N} \setminus I$ such that $\Phi_n$ outputs some $\ell \neq m$ upon seeing some enumeration of $K$. If we do find such a $K$, we proceed to Phase 3.
\item[Phase 3] We change our mind regarding $I$ and assert instead that $I = J \cup K$. The witness of Hausdorffness will now also declare that enumerations containing all of $J \cup K$ refer to different points than those containing $\max K + 1$.
\end{description}

This does yield a valid witness of computable Hausdorffness, as the condition added in Phase 2 is not actually met by any point if we do reach Phase 3. If we remain in Phase 1 forever, then $\Phi_n$ is undefined everywhere. If we remain in Phase 2 forever, then $\Phi_n$ is either undefined on some names in $\mathbf{X}_n$ or is constant $m$. If we reach Phase 3, $\Phi_n$ does not act extensionally on names for $I \in \mathbf{X}_n$.
\end{proof}
\end{example}

For the remainder of this section, we investigate a family of discrete spaces $S_A$
parameterized by some $A \subseteq \mathbb{N}$. This generalizes an example of
Weihrauch which is computably normal but not computably
regular~\cite[Example 5.4]{weihrauchm}.

\begin{definition}
Given $A \subseteq \mathbb{N}$, let $S_A := \{(n,\top) \mid n \in A\} \cup \{(n,\bot) \mid n \notin A\} \subseteq \mathbb{N} \times \mathbb{S}$.
\end{definition}

Regardless of the choice of $A$, the space $S_A$ is effectively countably-based and admissible as it inherits these properties as a subspace of $\mathbb{N} \times \mathbb{S}$. The space $S_A$ is Hausdorff and discrete, as the projection $\pi_1 : S_A \to \mathbb{N}$ is a computable injection into a Hausdorff and discrete space. It follows that $S_A$ is computably regular. Beyond that, we observe the following:

\begin{proposition}
The following are equivalent:
\begin{enumerate}
\item $S_A \cong \mathbb{N}$.
\item $S_A$ is a complete computable metric space.
\item $S_A$ is computably Quasi-Polish.
\item $S_A$ is computably separable.
\item $S_A$ is overt.
\item $A$ is computably enumerable.
\end{enumerate}
\end{proposition}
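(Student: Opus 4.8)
The plan is to establish the cycle $(1) \Rightarrow (2) \Rightarrow (3) \Rightarrow (5) \Rightarrow (6) \Rightarrow (1)$ and to tie in $(4)$ via $(1) \Rightarrow (4) \Rightarrow (5)$. Most of these arrows are formal consequences of facts recorded earlier. Since $\mathbb{N}$ carries the discrete metric and is thereby isomorphic (as a represented space) to a complete computable metric space, and being a complete computable metric space is an isomorphism invariant, $(1) \Rightarrow (2)$ is immediate; $(2) \Rightarrow (3)$ holds because every complete computable metric space is computably Quasi-Polish, as noted in the background; and $(3) \Rightarrow (5)$ is just the definition of computably Quasi-Polish as precomputably Quasi-Polish together with computable overtness. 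For the remaining tie-in, $(1) \Rightarrow (4)$ follows because the identity sequence witnesses that $\mathbb{N}$ is computably separable and separability transfers across isomorphisms, while $(4) \Rightarrow (5)$ is the standard fact that a computable dense sequence yields overtness: a nonempty open set must contain a sequence point, and membership of a fixed computable point in a given open set is semidecidable, so one semidecides nonemptiness by searching over the sequence.

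The two implications carrying the real content are $(5) \Rightarrow (6)$ and $(6) \Rightarrow (1)$. For $(5) \Rightarrow (6)$, the idea is to produce, uniformly in $n$, an open set $V_n \in \mathcal{O}(S_A)$ that is nonempty precisely when $n \in A$. Concretely, $V_n$ is the restriction to $S_A$ of the basic open $\{n\} \times \{\top\}$ of $\mathbb{N} \times \mathbb{S}$: its characteristic map accepts a name $(m,s)$ iff $m = n$ and the $\mathbb{S}$-component $s$ eventually produces a nonzero entry, which is semidecidable. The only point of $S_A$ with first coordinate $n$ is $(n,\top)$ when $n \in A$ and $(n,\bot)$ when $n \notin A$, and since the latter has second component $0^\omega$, we get $V_n \neq \emptyset$ iff $n \in A$. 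Overtness then lets us semidecide $V_n \neq \emptyset$ uniformly in $n$, so $A$ is computably enumerable.

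For $(6) \Rightarrow (1)$, I would show that the computable injection $\pi_1 : S_A \to \mathbb{N}$, already observed to be a bijection onto $\mathbb{N}$, has a computable inverse. Given $n \in \mathbb{N}$, one outputs $n$ in the first coordinate and, for the second coordinate, outputs $0$s while enumerating $A$, switching to a nonzero entry if and when $n$ appears; this produces a name of $\top$ exactly when $n \in A$ and of $\bot$ otherwise, i.e.\ a name of the unique point of $S_A$ above $n$. Hence $\pi_1$ is an isomorphism. The main obstacle is bookkeeping rather than depth: one must correctly align the representation of $S_A$ (in which the $\top/\bot$ component is forced by membership in $A$) with the enumeration of $A$, and verify in $(5) \Rightarrow (6)$ that the $V_n$ are genuinely open and uniformly computable as points of $\mathcal{O}(S_A)$. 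The usual topological subtleties of subspaces cause no trouble here, since $\mathbb{N} \times \mathbb{S}$ is effectively countably based and admissible, so $S_A$ carries the subspace topology.
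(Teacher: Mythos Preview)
Your proof is correct and matches the paper's own argument essentially verbatim. The only difference is organisational: the paper runs the trivial chain as $1 \Rightarrow 2 \Rightarrow 3 \Rightarrow 4 \Rightarrow 5$ rather than splitting off $(4)$ via $1 \Rightarrow 4 \Rightarrow 5$, but the two substantive steps $(5) \Rightarrow (6)$ (test nonemptiness of the open set $\{(n,\top)\}\cap S_A$) and $(6) \Rightarrow (1)$ (invert $\pi_1$ by $n \mapsto (n,[n\in A])$) are identical to yours.
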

\begin{proof}
The implications $1 \Rightarrow 2 \Rightarrow 3 \Rightarrow 4 \Rightarrow 5$ are all trivial. If $S_A$ is overt we can recognize $n \in A$ by asking
whether the open set accepting only $(n,\top)$ is non-empty. If $A$ is computably enumerable, the map $n \mapsto (n, [n \in A]) : \mathbb{N} \to \mathbb{N}
\times \mathbb{S}$ is computable, and together with the projection $\pi_1 : \mathbb{N} \times \mathbb{S} \to \mathbb{N}$, it witnesses that $\mathbb{N} \cong S_A$.
\end{proof}

\begin{proposition}
The following are equivalent:
\begin{enumerate}
\item $S_A$ computably embeds into $[0,1]^\omega$.
\item $S_A$ is computably normal.
\item $A$ is in the first level of the difference hierarchy over $\Sigma_1$.
\end{enumerate}
\end{proposition}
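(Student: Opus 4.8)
The plan is to prove the cycle $(1)\Rightarrow(2)\Rightarrow(3)\Rightarrow(1)$, using throughout that membership in the first level of the difference hierarchy means $A = U\setminus V$ for two c.e.\ sets with $V\subseteq U$, and recording the standing observation that, since a name of a point of $S_A$ delivers its index $n$ outright together with an $\mathbb{S}$-name that fires exactly when $n\in A$, the open (resp.\ closed) subsets of $S_A$ are classified by pairs of c.e.\ sets $Q\subseteq P$ through the inclusion set $Q\cup(A\cap P)$.

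For $(1)\Rightarrow(2)$ I would chain the results already available: by Proposition~\ref{prop:hilbertcubehereditarilynormal} the Hilbert cube is hereditarily normal, and by Proposition~\ref{prop:hereditarilynormal} (applicable since $[0,1]^\omega$ is computably admissible and effectively countably based) every subspace of it is hereditarily normal. Hereditary normality implies normality outright: feeding $\operatorname{HeNorm}$ two \emph{disjoint} closed sets $C,D$ we have $C\setminus D=C$ and $D\setminus C=D$, so any solution is already a normality witness. As $S_A$ is assumed to embed as a subspace, it is computably normal.

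For $(3)\Rightarrow(1)$ I would build the embedding by hand from a presentation $A=U\setminus V$. Fix the enumerations of $U,V$ and let $u(n)$ be the stage at which $n$ enters $U$. The image of the point with index $n$ uses one coordinate block to record $n$ (so that $n$, hence the point, is recoverable), and a single \emph{status coordinate} $c_n$ set to $2^{-u(n)-1}$ when $n\in A$ and to $0$ otherwise. The point of the construction is that $c_n$ can still be named: while $n$ has not yet been enumerated into $U$ one outputs the shrinking intervals $[0,2^{-s}]$ (heading for $0$); once $n\in U$ is confirmed at stage $u(n)$ one \emph{holds} the interval $[0,2^{-u(n)}]$ and waits for the decision whether $n\in A$ (read off the given $\mathbb{S}$-name) or $n\in V$ (read off the built-in enumeration of $V$). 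Since $n\in U$ forces exactly one of these, the decision arrives and one commits to $2^{-u(n)-1}$ or to $0$, both of which lie in the held interval; this is exactly where the single permitted mind-change of the first difference level is used. The inverse recovers $n$, enumerates $U$, and, upon confirming $n\in U$, semidecides $c_n>0$ to fire its output $\mathbb{S}$-name precisely when $n\in A$. The remaining work is the routine verification that this is a genuine computable embedding with computable inverse.

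The hard part will be $(2)\Rightarrow(3)$: extracting a first-level difference presentation of $A$ from a realiser $R$ of $\operatorname{Norm}$. Here I would exploit the classification above. The all-bottom set $B=\{(n,\bot): n\notin A\}$ is computably closed (it has inclusion set $\overline A$, coded by $(\mathbb{N},\emptyset)$), while the sets disjoint from it are exactly the $\{(n,\top): n\in D\}$ for co-c.e.\ $D\subseteq A$; separating $B$ from such a set forces a c.e.\ set sandwiched between $\overline A$ and $\overline D$. The plan is to feed $R$ the pair consisting of $B$ and a suitable closed approximation of the top part and to read a c.e.\ superset $U$ of $A$ together with a c.e.\ set $V$ catching its false positives directly off the c.e.\ codes $(P,Q)$ of the two separating open sets produced by $R$, so that $A=U\setminus V$; equivalently, in contrapositive form, if no such presentation exists one engineers a co-c.e.\ $D\subseteq A$ for which the required sandwiching c.e.\ set cannot be produced uniformly, contradicting computability of $R$. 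I expect the main difficulty to be pinning down the precise closed-set inputs and verifying that the harvested sets genuinely witness membership in the \emph{first} difference level, i.e.\ that only one mind-change is incurred rather than some higher level of the hierarchy.
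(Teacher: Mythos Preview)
Your arguments for $(1)\Rightarrow(2)$ and $(3)\Rightarrow(1)$ are correct and line up with the paper's: the paper invokes the same two propositions for the first implication, and for the third it embeds into $\mathbb{N}\times[0,1]$ via a three-phase procedure tracking the $2$-c.e.\ approximation of $A$, which is the same mechanism as your status-coordinate construction (hold the interval, then commit once the single permitted mind-change resolves).

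For $(2)\Rightarrow(3)$ there is a genuine gap, and the approach you sketch runs into a concrete obstacle. Your plan is to feed $\operatorname{Norm}$ the global bottom set $B=\{(n,\bot):n\notin A\}$ together with some ``closed approximation of the top part''. But by your own classification of closed sets, any computably closed subset of $S_A$ disjoint from $B$ has inclusion set contained in $A$, and \emph{naming} such a set requires exhibiting a c.e.\ $Q\supseteq\overline{A}$ --- information you simply do not possess. So apart from $\emptyset$ (which yields nothing), you cannot even produce a valid second input to $\operatorname{Norm}$, and the contrapositive version faces the same obstruction for the same reason.

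The paper's argument is entirely different and localised to each $n$. It feeds $\operatorname{Norm}$ the pair $C_1=(\{n\}\times\mathbb{S})\cap S_A$ and $C_2=\{(n,\bot)\}\cap S_A$. Both are computably closed \emph{irrespective of $A$}, and --- this is the missing idea --- their disjointness holds exactly when $n\in A$. One runs the realiser on this possibly-illegal input while watching the outputs: if the first output open set ever accepts a name of $(n,\top)$, proclaim $n\in A$ and simultaneously extend the $C_1$-name to a name of $\emptyset$ (rendering the input legal in either case); thereafter, if the second output open set accepts the name $(n,0^\omega)$, revert to $n\notin A$. A continuity argument shows this reversal occurs iff $n\notin A$: if $n\in A$, acceptance of $(n,0^\omega)$ would by monotonicity force acceptance of $(n,\top)$, contradicting disjointness of the two output opens (the first already contains $(n,\top)$). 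This is precisely a $2$-c.e.\ description of $A$. The point is to choose closed inputs whose \emph{disjointness itself} encodes membership in $A$, rather than to attempt a global separation that presupposes knowledge of $A$.
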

\begin{proof}
\begin{description}
\item[$1 \Rightarrow 2$] By Propositions \ref{prop:hereditarilynormal} and \ref{prop:hilbertcubehereditarilynormal}.
\item[$2 \Rightarrow 3$] We are given some $n \in \mathbb{N}$, and initially proclaim that $n \notin A$. We run the algorithm for normality of $S_A$ on the closed sets $(\{n\} \times \mathbb{S}) \cap S_A$ and $\{(n,\bot)\} \cap S_A$. If it is actually the case that $n \in A$, then the algorithm must react by making the first open set it returns accept $(n,\top)$. If this ever happens, we change our proclamation to be that $n \in A$, and we adjust the closed sets we feed to the algorithm to be $\emptyset$ and $\{(n,\bot)\} \cap S_A$.
Then let us show that the algorithm reacts by letting the second open set it returns accept $(n, \bot)$ if and only if $n \notin A$, which is enough to
prove that $A$ is a computable difference of opens. The right-to-left direction is obvious. For the left-to-right,
assume that $n \in A$ and that the code for the second open set returned by $\operatorname{Norm}$ accepts $(n, \bot)$; by continuity
of the realizer, it means it must also accept $(n, \top)$. But then it means that $\operatorname{Norm}$
returns open two sets overlapping on $(n, \top)$ for a valid input, which is a contradiction.
\eike{I don't know if the above argument could be potentially made cleaner by separating the algorithm more from the correctness proof}
\item[$3 \Rightarrow 1$] We describe an embedding $\iota : S_A \to \mathbb{N} \times [0,1]$ instead. The point $(n,b)$ gets mapped somewhere into $\{n\} \times [0,1]$. We run our $2-$c.e.\ procedure for $A$. In the first phase, we believe that $n \notin A$. We provide approximations to $\iota$ that map $(n,b)$ to $(n,0)$ and approximations to $\iota^{-1}$ that map $(n,x)$ to $(n,\bot)$ for all $x \in [0,1]$. If the $2-$c.e.\ procedure for $A$ ever declares that $n \in A$, we advance to Phase 2. There is some $\varepsilon > 0$ such that declaring $\iota(n,\top) = (n,\varepsilon)$ is still compatible with the information provided so far. We do this to one name of $(n,\top)$ at a time, and also adjust $\iota^{-1}$ such that $\iota^{-1}(n,x) = (n,\top)$ for $x > \frac{\varepsilon}{2}$. We do not provide any additional information on what $\iota(n,\bot)$ would be -- in Phase 2, we believe this value does not need to be defined. If we do learn that after all, $n \notin A$, we enter the third and final phase. We stop setting $\iota(n,\top) = \varepsilon$, and instead make $\iota(n,\bot) = (n,0)$ and
$\iota^{-1}(n,0) = (n,\bot)$ true.
\end{description}
\end{proof}

\begin{proposition}
$S_A$ is precomputably Quasi-Polish iff $A$ is $\Delta^0_2$.
\end{proposition}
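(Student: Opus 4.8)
The plan is to reduce the statement to a question about the effective Borel complexity of $S_A$ inside a well-understood ambient space. First I record that $\mathbb{N}\times\mathbb{S}$ is computably Quasi-Polish, being a product of the computably Quasi-Polish spaces $\mathbb{N}$ and $\mathbb{S}$, and that $S_A$ sits inside it as a subspace carrying the subspace topology (it is effectively countably based and admissible, as already noted for $S_A$). The engine of the argument is the effective analogue of de Brecht's theorem that a subspace of a Quasi-Polish space is itself Quasi-Polish exactly when it is $\Pi^0_2$: for an effectively countably based, computably admissible ambient space, a subspace is precomputably Quasi-Polish if and only if it is a computably $\Pi^0_2$ subset (from \cite{paulydebrecht4}, using Proposition~\ref{Proposition: extending open sets in ccb spaces} to extend and restrict open sets). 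Granting this, it remains to determine exactly when $S_A$ is computably $\Pi^0_2$ in $\mathbb{N}\times\mathbb{S}$, and to match that condition with $A \in \Delta^0_2$.

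For the forward direction, suppose $S_A$ is precomputably Quasi-Polish; by the characterization it is then a computably $\Pi^0_2$ subset of $\mathbb{N}\times\mathbb{S}$. The two inclusions $\iota_\top,\iota_\bot \colon \mathbb{N} \to \mathbb{N}\times\mathbb{S}$ given by $\iota_\top(n) = (n,\top)$ and $\iota_\bot(n) = (n,\bot)$ are computable, and the computably $\Pi^0_2$ sets are closed under computable preimages, so $\iota_\top^{-1}(S_A) = A$ and $\iota_\bot^{-1}(S_A) = \mathbb{N}\setminus A$ are both computably $\Pi^0_2$ subsets of $\mathbb{N}$. Since $\mathbb{N}$ is discrete, these are $\Pi^0_2$ in the arithmetical sense, whence $A \in \Pi^0_2$ and $A \in \Sigma^0_2$ (via its complement), i.e.\ $A \in \Delta^0_2$.

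For the converse, assume $A \in \Delta^0_2$, so both $A$ and $\mathbb{N}\setminus A$ are $\Sigma^0_2$. I decompose the complement as $S_A^c = ((\mathbb{N}\setminus A)\times\{\top\}) \cup (A\times\{\bot\})$. Here $\mathbb{N}\times\{\top\}$ is computably open and $\mathbb{N}\times\{\bot\}$ is computably closed, while the cylinders $A\times\mathbb{S}$ and $(\mathbb{N}\setminus A)\times\mathbb{S}$ are pullbacks of $A$ and $\mathbb{N}\setminus A$ along the computable projection $\pi_1$, hence computably $\Sigma^0_2$. Using that the computably $\Sigma^0_2$ sets form a lattice containing all open and all closed sets, each piece of $S_A^c$ is the intersection of a $\Sigma^0_2$ cylinder with an open (respectively closed) set and is therefore computably $\Sigma^0_2$, and so is their union. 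Thus $S_A$ is a computably $\Pi^0_2$ subspace of the computably Quasi-Polish space $\mathbb{N}\times\mathbb{S}$, and the characterization yields that $S_A$ is precomputably Quasi-Polish.

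The two directions above are then just complexity bookkeeping once the right framing is in place. The substantive ingredient, which I would cite from \cite{paulydebrecht4} or else prove separately, is the effective subspace characterization of precomputable Quasi-Polishness; in particular its non-trivial implication that an abstractly precomputably Quasi-Polish subspace of $\mathbb{N}\times\mathbb{S}$ must already be computably $\Pi^0_2$, which is exactly where the interaction between the subspace topology and the ideal presentation of $\mathbf{I}(\ll)$ is needed. If one prefers a self-contained forward direction, the alternative is to argue directly that a precomputably Quasi-Polish presentation of $S_A$ lets one $\emptyset'$-compute a name of the unique point of $S_A$ with first coordinate $n$ and thereby decide $n \in A$; I expect this relativized choice step to be the main obstacle, since it amounts to recovering overtness of $\mathbf{I}(\ll)$ relative to the halting problem.
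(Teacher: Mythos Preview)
Your proposal is correct and follows essentially the same route as the paper: reduce via the characterisation from \cite{paulydebrecht4} that a subspace of a computably Quasi-Polish space is precomputably Quasi-Polish iff it is $\Pi^0_2$, then read off the complexity of $A$ from that of $S_A$ using the two computable sections $n \mapsto (n,\top)$ and $n \mapsto (n,\bot)$, and conversely build $S_A$ (or its complement) as $\Delta^0_2$ from $A$. The paper's proof is just a terser version of exactly this argument; your explicit decomposition of $S_A^c$ and the pullback reasoning make the same steps visible.
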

\begin{proof}
 As $\mathbb{N} \times \mathbb{S}$ is computably Quasi-Polish, $S_A$ is precomputably Quasi-Polish iff it is a $\Pi^0_2$-subspace of $\mathbb{N} \times \mathbb{S}$. The latter implies that $A$ is $\Delta^0_2$, since $n \in A \Leftrightarrow (n,\top) \in S_A$ and $n \notin A \Leftrightarrow (n,\bot) \in S_A$. Conversely, if $A$ is $\Delta^0_2$, then so is $S_A$.
\end{proof}

\section{Finite spaces}
\label{sec:finite_spaces}

\begin{proposition}
When $\mathbf{X}$ has cardinality $n$, is classically discrete and has a fibre-overt representation, there is a computable injection $\iota : \mathbf{X} \to \mathbf{n}$.
\end{proposition}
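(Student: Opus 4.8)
The plan is to extract from the hypotheses a finite, relatively clopen partition of the domain that is, moreover, positively located, and then to turn this data into a computable labelling of the $n$ points. Write $\delta \colon \subseteq \Baire \to \mathbf{X}$ for the given fibre-overt representation, put $D = \dom(\delta)$, and for a point $x$ let $F_x = \delta^{-1}(x)$. Classical discreteness says that the final topology on $\mathbf{X}$ is discrete, so each singleton $\{x\}$ is clopen and hence each fibre $F_x$ is relatively clopen in $D$. In particular $D \cap \overline{F_x} = F_x$, and for $x \neq x'$ we obtain $F_x \cap \overline{F_{x'}} = \emptyset$ (the closures may still meet outside $D$). Fibre-overtness provides the computable map $x \mapsto V(x) := \overline{F_x} \in \mathcal{V}(\Baire)$, and the identity $F_x = D \cap \overline{F_x}$ shows that the $n$ overt sets $V(x_1), \dots, V(x_n)$ are pairwise distinct points of $\mathcal{V}(\Baire)$.

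First I would record a clean consequence that drives everything. For names $p, q \in D$ of points $x, x'$, the equivalence $x = x' \iff q \in \overline{F_x} = V(p)$ holds by $D \cap \overline{F_x} = F_x$. Now $q \in \overline{F_x}$ is \emph{co}-semidecidable from $(p,q)$, since it holds exactly when every prefix-cylinder $q{\upharpoonright}m\,\Baire$ meets the overt set $V(p)$, each such meet-test being semidecidable by overtness. Hence inequality on $\mathbf{X}$ is semidecidable, i.e.\ $\mathbf{X}$ is (computably) Hausdorff. Dually, relative clopenness gives, for each name $p$ of $x$, a \emph{committing} prefix $w \sqsubset p$ with $w\Baire \cap D \subseteq F_x$; equivalently $w\Baire \cap \overline{F_{x'}} = \emptyset$ for all $x' \neq x$, so that $w$ pins down the point among all names in the domain.

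It remains to produce, from a name of $x$, a canonical label in $\mathbf{n}$ that is the same for all names of $x$ and injective in $x$. My approach is to compute a canonical committing cylinder purely from the overt datum $V(x)$: since such a search depends only on $V(x)$, its output is well defined on points, and since the $V(x_i)$ are pairwise distinct the associated cylinders can be arranged to be pairwise distinct, yielding a computable injective key $\mathbf{X} \to \baire$ that I then collapse to $\mathbf{n}$ using that exactly $n$ values occur. The hard part is the certification step: recognising that a prefix $w$ is committing is a priori a $\Pi^0_1$ condition \emph{relative to the closures of the other fibres}, and those closures are not available from a single input name, since $\mathbf{X}$ need not be overt and its points cannot be enumerated. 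This is exactly where the cardinality hypothesis is essential: one runs a dovetailed search on the input name that, using only the semidecidable meet-tests against $V(x)$ together with the semidecidable inequality test from the previous paragraph, certifies commitment by exhaustion once $n$ mutually separated cylinders have appeared, so that termination and injectivity follow from $|\mathbf{X}| = n$ while well-definedness follows from the determinism of the search and the dependence of all data on $V(x)$ alone. I expect this counting-by-$n$ certification to be the main obstacle, with the clean Hausdorffness observation serving as the principal lever feeding it.
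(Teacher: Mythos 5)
Your preliminary observations are correct (each fibre $F_x$ is relatively clopen in $D = \dom(\delta)$, so $D \cap \overline{F_x} = F_x$ and $F_x \cap \overline{F_{x'}} = \emptyset$ for $x \neq x'$), but the ``clean Hausdorffness observation'' that you intend to use as the principal lever is wrong. Fibre-overtness hands you $V(p) = \overline{\delta^{-1}(\delta(p))}$ only as a point of $\mathcal{V}(\Baire)$, i.e.\ with \emph{positive} information: for open $U$ the test ``$U \cap V(p) \neq \emptyset$'' is semidecidable, but ``$U \cap V(p) = \emptyset$'' is only co-semidecidable. Consequently ``$q \in V(p)$'', which is the countable conjunction $\forall m \, \bigl(q{\upharpoonright}m\,\Baire \cap V(p) \neq \emptyset\bigr)$ of semidecidable conditions, is a $\Pi^0_2$ predicate of the names, not a co-semidecidable one; its negation $\exists m \, \bigl(q{\upharpoonright}m\,\Baire \cap V(p) = \emptyset\bigr)$ is an existential quantifier over co-semidecidable tests, hence $\Sigma^0_2$ and not semidecidable. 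To semidecide non-membership you would need the fibres as computable \emph{closed} sets (points of $\mathcal{A}(\Baire)$), which is not among the hypotheses. So the ``semidecidable inequality test'' does not exist by your argument; in the paper, computable Hausdorffness of such spaces is a \emph{corollary} of this proposition (via the injection into $\mathbf{n}$), so deriving it first and feeding it into the construction is backwards.

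This breaks the main step, which in any case is not carried out: you acknowledge the certification of a committing prefix as ``the main obstacle'' and only sketch a dovetailed search. As described it cannot be run, because from a single input you have access to $V(x)$ only; the space need not be overt, so the other points and their fibres cannot be enumerated or searched against, and the exhaustion ``once $n$ mutually separated cylinders have appeared'' has nothing to feed it once the inequality test is gone. The idea you are missing is \emph{non-uniformity}: computability of $\iota$ only requires a computable realiser, so finitely much information about $\mathbf{X}$ may be hard-coded as parameters. Classically choose one name $p_i$ for each of the $n$ points $x_i$; relative openness of the fibres yields prefixes $w_i$ of $p_i$ with $w_i\Baire \cap \dom(\delta) \subseteq \delta^{-1}(x_i)$, and these $n$ finite words are the parameters. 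Given $x$, run in parallel the $n$ semidecidable tests ``$V(x) \cap w_i\Baire \neq \emptyset$'' --- exactly what fibre-overtness provides, since $w_i\Baire$ is open. The $i$-th test succeeds iff $x$ has a name extending $w_i$, iff $x = x_i$; so exactly one test succeeds, and outputting its index is well-defined, computable, and injective. With the committing prefixes supplied as parameters rather than searched for, the certification problem you identified disappears entirely.
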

\begin{proof}
Classically, we can select names $p_1, \ldots, p_n$ for the $n$ points in $\mathbf{X}$. The classical discreteness ensures that these have prefixes $w_1,\ldots,w_n$ such that no $w_i$ extends to a prefix of some $p_j$ for $i \neq j$. Since the $(w_i)_{i \leq n}$ contain only finite information, we can use these as parameters for $\iota$.

Given some $x \in \mathbf{X}$, we can use the fibre-overtness of the representation to ask for each $i$ whether $x$ has a name starting with $w_i$. Since this holds for exactly one $i$, we can determine this effectively -- this process computes the desired $\iota$.
\end{proof}

\begin{corollary}
If $\mathbf{X}$ is finite, classically discrete and effectively countably-based, then it is already discrete and Hausdorff.
\end{corollary}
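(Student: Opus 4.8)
The plan is to reduce the corollary to the preceding Proposition by producing a computable injection $\iota \colon \mathbf{X} \to \mathbf{n}$, where $n$ is the cardinality of $\mathbf{X}$, and then to transport decidability of equality along $\iota$.

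First I would record the transport step, since it is the cheap half. The finite discrete space $\mathbf{n}$ has decidable equality, so it is both discrete and Hausdorff, and its diagonal $\Delta_{\mathbf{n}}$ is computably clopen in $\mathbf{n} \times \mathbf{n}$. If $\iota$ is a computable injection, then $\iota \times \iota \colon \mathbf{X} \times \mathbf{X} \to \mathbf{n} \times \mathbf{n}$ is computable and, by injectivity, $\Delta_\mathbf{X} = (\iota \times \iota)^{-1}(\Delta_{\mathbf{n}})$. A preimage of a computably open (resp.\ computably closed) set under a computable map is again computably open (resp.\ closed), so $\Delta_\mathbf{X}$ is computably open and computably closed; that is, $\mathbf{X}$ is discrete and Hausdorff. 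Everything therefore rests on producing $\iota$.

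To build $\iota$ I would read it off the effective basis $(B_m)_m$ rather than appeal to a fibre-overt representation. Fix an enumeration $x_1, \dots, x_n$ of the points of $\mathbf{X}$. Classical discreteness says the final topology is discrete, so each singleton $\{x_k\}$ is open; since every open set is a union of basis elements, the singleton $\{x_k\}$ must itself equal some $B_{m_k}$. These finitely many indices $m_1, \dots, m_n$ are admissible as finite advice for $\iota$. Because the basis map $B_{\cdot} \colon \mathbb{N} \to \mathcal{O}(\mathbf{X})$ is computable, the predicate $x \in B_{m_k}$ is semidecidable uniformly in $k$; so on input $x$ I run the $n$ membership tests in parallel, and exactly one index $k$---the one with $x = x_k$---is ever accepted. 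Outputting this $k$ computes a bijection $\iota \colon \mathbf{X} \to \mathbf{n}$.

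The main obstacle is conceptual rather than technical: the hypothesis actually provided is ``effectively countably-based'', whereas the preceding Proposition asks for a fibre-overt representation, and the characterisation of effectively countably-based spaces only supplies such a representation under an admissibility assumption that is absent from the statement. The argument above circumvents this by exploiting that, in a finite classically discrete space, the singletons are themselves computably open basis elements---exactly the data needed for $\iota$---so neither fibre-overtness nor admissibility must be invoked explicitly. Alternatively, one could first verify that such an $\mathbf{X}$ is admissible, recovering each point from its neighbourhood filter by testing membership of the $\{x_k\}$, and then apply the characterisation and the preceding Proposition verbatim.
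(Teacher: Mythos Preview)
Your proof is correct. The paper states this corollary without proof, evidently intending it to follow from the preceding proposition (which produces $\iota \colon \mathbf{X} \to \mathbf{n}$ from a fibre-overt representation). You correctly flag the gap: the bridge from ``effectively countably-based'' to ``fibre-overt representation'' via Proposition~\ref{Proposition: characterisation of ccb spaces} requires computable admissibility, which the corollary does not assume. Your direct construction---noting that in a finite classically discrete space each singleton is open, hence equal to some basis element $B_{m_k}$, and then running the $n$ semidecidable membership tests $x \in B_{m_k}$ in parallel---bypasses this cleanly. It is essentially the same idea as the proposition's proof (find $n$ semidecidable predicates separating the points), but with basis membership replacing the fibre-overt prefix test; this buys you freedom from the fibre-overt hypothesis. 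Your alternative route, first establishing admissibility and then invoking Proposition~\ref{Proposition: characterisation of ccb spaces} and the preceding proposition verbatim, is also valid and is probably closer to what the authors had in mind when labelling this a corollary.
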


\begin{proposition}
If $s : \mathbf{n} \to \mathbf{X}$ is a computable surjection, then $\mathbf{X}$ is discrete iff it is Hausdorff.
\begin{proof}
W.l.o.g. we may assume that $s$ is even a bijection. Moreover, from the assumption that $\mathbf{X}$ is either discrete or Hausdorff, it will follow that any bijective computable map $s : \mathbf{n} \to \mathbf{X}$ is actually a computable isomorphism, and thus that $\mathbf{X}$ is both discrete and Hausdorff: If $\mathbf{X}$ is discrete, given $x \in \mathbf{X}$ we can test for all $i \in \mathbf{n}$ if $s(i) = x$ and thereby identify $s^{-1}(x)$. If $\mathbf{X}$ is Hausdorff, given $x \in \mathbf{X}$ we test for all $i \in \mathbf{n}$ if $s(i) \neq x$ until we confirm this for all $i \in \mathbf{n} \setminus \{j\}$, and then we know that $s(j) = x$.
\end{proof}
\end{proposition}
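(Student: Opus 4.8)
The plan is to show that, under either hypothesis, the given computable surjection $s$ is forced to be a computable isomorphism onto $\mathbf{X}$; since $\mathbf{n}$ is (trivially) both discrete and Hausdorff, transporting these properties along $s$ yields that $\mathbf{X}$ is simultaneously discrete and Hausdorff, which in particular establishes both directions of the stated equivalence at once.

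First I would reduce to the case where $s$ is a bijection. Because $s$ is surjective, $\mathbf{X}$ is finite, say of cardinality $m \le n$, so classically there is a strictly increasing tuple $i_0 < \cdots < i_{m-1}$ in $\mathbf{n}$ with $\{s(i_0), \dots, s(i_{m-1})\} = \mathbf{X}$ and the $s(i_k)$ pairwise distinct. This tuple is a finite amount of data, so it may be hard-wired as a parameter; the induced map $s' : \mathbf{m} \to \mathbf{X}$, $s'(k) = s(i_k)$, is then a computable \emph{bijection}, and replacing $(\mathbf{n}, s)$ by $(\mathbf{m}, s')$ changes neither $\mathbf{X}$ nor its discreteness or Hausdorffness. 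So I may assume $s$ is bijective.

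Next, assuming $\mathbf{X}$ is discrete, I would compute $s^{-1}$ as follows: given a name of $x \in \mathbf{X}$, run the semidecision procedure for $s(i) = x$ for all $i \in \mathbf{n}$ in parallel; since $s$ is a bijection, exactly one index $j$ satisfies $s(j) = x$, and its test eventually accepts, so I output that $j$. Dually, assuming $\mathbf{X}$ is Hausdorff, given $x$ I would run the semidecision procedure for $s(i) \neq x$ for all $i$ in parallel; exactly $n-1$ of these accept, leaving a unique index $j$ for which no confirmation of inequality arrives, and then $s(j) = x$, so I output $j$. In both cases $s^{-1}$ is computable, whence $s$ is a computable isomorphism and $\mathbf{X} \cong \mathbf{n}$.

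Finally, since $\mathbf{n}$ carries decidable equality it is both discrete and Hausdorff, and both properties transfer across the isomorphism to $\mathbf{X}$; thus each of the two hypotheses yields the other. The one genuinely non-uniform point, and the step I expect to require the most care, is the reduction to a bijective $s$: it exploits that finiteness of $\mathbf{X}$ permits hard-wiring the choice of representatives as finite advice, which is exactly what keeps $s'$ and its inverse computable despite our being unable to decide the identifications uniformly from the original data.
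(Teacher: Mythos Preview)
Your proposal is correct and follows essentially the same route as the paper: reduce to a bijective $s$ (you spell out explicitly that the finite choice of representatives can be hard-wired, which is what the paper's ``w.l.o.g.'' abbreviates), then use the semidecidability of equality, respectively inequality, to compute $s^{-1}$ by parallel search over $\mathbf{n}$. The paper's argument is identical in structure and key idea.
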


\subsection{Computably discrete but not computably Hausdorff two-point space}

\begin{definition}
For an infinite and co-infinite set $A \subset \mathbb{N}$, we define the space $D_A$ with underlying set $\{a,b\}$ where $p$ is a name for $a$ if $p$ is obtained from the characteristic function of $A$ by replacing finitely many $1$s with $0$s, and $p$ is a name for $b$ if it is obtained from the characteristic function of $\mathbb{N}\setminus A$ by replacing finitely many $1$s with $0$s.
\end{definition}

\begin{observation}
The space $D_A$ is discrete.
\begin{proof}
Whenever two valid names have a $1$ in the same position, they denote the same point in $D_A$. Conversely, any two names for the same point in $D_A$ share a $1$ somewhere.
\end{proof}
\end{observation}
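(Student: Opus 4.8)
The plan is to exhibit, for arbitrary names $p,q$ of points of $D_A$, a semidecision procedure for the predicate ``$p$ and $q$ name the same point'', thereby showing that the diagonal of $D_A$ is computably open. The whole argument rests on a single structural observation about names, which I would establish first: unwinding the definition, a name $p$ for $a$ agrees with the characteristic function of $A$ except that finitely many $1$s have been turned into $0$s, so its support $\{n \mid p(n) = 1\}$ is a cofinite subset of $A$; symmetrically, the support of any name for $b$ is a cofinite subset of $\mathbb{N} \setminus A$. Consequently the supports of a name for $a$ and a name for $b$ are always disjoint, whereas any two names for the same point have supports that are both cofinite in the same infinite set.

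The procedure is then the obvious one: given $p$ and $q$, search in parallel over all positions $n$ for one with $p(n) = q(n) = 1$, and output $\top$ precisely if such an $n$ is found. I would verify correctness in two directions. For soundness, if $p$ and $q$ name different points, then one support lies in $A$ and the other in $\mathbb{N}\setminus A$; these are disjoint, so no common $1$-position exists and the search runs forever, as it should. For completeness, if $p$ and $q$ name the same point, say $a$, then both supports are cofinite in $A$; since $A$ is infinite, the intersection of two cofinite subsets of $A$ is again infinite, hence nonempty, so a shared $1$-position exists and the search halts. The argument for $b$ is identical, using that $\mathbb{N}\setminus A$ is infinite.

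The hard part is not really hard, but it is the only place where the hypotheses genuinely enter: the completeness direction relies crucially on both $A$ and $\mathbb{N}\setminus A$ being infinite, since this is exactly what forces every point to have only names of infinite support and therefore guarantees that two names of a common point must eventually agree on a $1$. Once this is in place, the ``search for a shared $1$'' procedure realises the characteristic map $D_A \times D_A \to \mathbb{S}$ of the diagonal, which is precisely what it means for $D_A$ to be discrete.
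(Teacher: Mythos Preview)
Your proof is correct and follows exactly the same idea as the paper's: the semidecision procedure searches for a position where both names show a $1$, and your soundness/completeness analysis unpacks precisely the two sentences the paper gives. Your observation that infiniteness of both $A$ and its complement is what makes the completeness direction go through is spot on, though the paper leaves this implicit.
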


\begin{proposition}
\label{prop:twopointnothausdorff}
There exists $A$ such that $D_A$ is not computably Hausdorff.
\end{proposition}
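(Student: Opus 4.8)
The plan is to recast computable Hausdorffness of $D_A$ in terms of the $1$-patterns of names and then to diagonalise against all candidate realisers. First I would record the combinatorics of names: a name for $a$ is exactly $\chi_S$ for a set $S\subseteq A$ with $A\setminus S$ finite, and a name for $b$ is exactly $\chi_T$ for $T\subseteq\mathbb{N}\setminus A$ with $(\mathbb{N}\setminus A)\setminus T$ finite. Thus two names denote the same point precisely when their sets of $1$s meet, and denote different points precisely when their sets of $1$s are disjoint (this is the content of the preceding observation). Consequently $D_A$ is computably Hausdorff iff there is a Turing functional that, given names $p,q$, halts with $\top$ exactly when the $1$-sets of $p$ and $q$ are disjoint, i.e.\ exactly when $p$ and $q$ lie on opposite sides of the partition $\{A,\mathbb{N}\setminus A\}$.

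The key leverage is a continuity/monotonicity observation. If a realiser $\Phi_e$ halts with $\top$ after reading finite prefixes $\sigma\prec p$ and $\tau\prec q$, then it halts with $\top$ on every pair of names extending $(\sigma,\tau)$. Hence, if the $1$-positions occurring in $\sigma$ and $\tau$ can all be placed on a single side of $A$, then $\sigma$ and $\tau$ both extend to names of the \emph{same} point, and on that equal pair $\Phi_e$ wrongly outputs $\top$. This is the engine of the construction: catching any acceptance on a prefix pair whose revealed $1$s are not yet forced to straddle the partition lets us refute $\Phi_e$.

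I would then build $A$ (infinite and co-infinite) in stages so as to meet, for every index $e$, the requirement that $\Phi_e$ is not a correct inequality-semidecider. For $R_e$ we dovetail $\Phi_e$ over all finite prefix pairs $(\sigma,\tau)$, waiting for an acceptance. If one is found whose finitely many $1$-positions are still free (or already committed to one side), we commit all of them to that single side and complete $\sigma,\tau$ to two genuine names of the same point using fresh $1$s drawn from that side; by the monotonicity observation $\Phi_e$ then accepts an equal pair, so $R_e$ is met. If instead $\Phi_e$ never accepts any prefix pair, then it in particular never accepts the canonical pair $(\chi_A,\chi_{\mathbb{N}\setminus A})$, which is a genuine unequal pair, so $\Phi_e$ fails to semidecide this inequality and $R_e$ is again met with no action required.

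The main obstacle is coordinating these commitments across requirements while keeping both $A$ and $\mathbb{N}\setminus A$ infinite. The only configuration that does not immediately yield a refutation is when a careful $\Phi_e$ accepts solely on prefix pairs that are already forced to straddle the partition by higher-priority commitments; ruling this out, and guaranteeing that each requirement can always either manufacture a wrongly-accepted equal pair or expose a missed genuine inequality, is exactly what a finite-injury priority layout must secure. I would therefore assign priorities to the $R_e$, let each requirement act only on fresh positions beyond the finitely many currently committed, and reserve an infinite supply of witnesses for each side so that the standing infiniteness and co-infiniteness requirements are preserved; verifying that this scheduling always leaves an active requirement a same-side acceptance to exploit (or else a permanent miss) is the delicate heart of the proof.
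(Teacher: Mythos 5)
Your setup matches the paper's in its ingredients: names as near-characteristic functions, equality iff the $1$-sets meet, monotonicity of acceptance under extension of prefixes, and the dichotomy ``hijack a wrongly-accepted pair vs.\ exhibit a genuinely distinct pair that is never accepted.'' But the proposal stalls exactly where you say it does, and that gap is real, not a routine verification: by dovetailing $\Phi_e$ over \emph{all} prefix pairs you create the ``straddling'' case (acceptances occurring only on pairs whose revealed $1$s are already split across the two sides by earlier commitments), and neither of your two closing moves applies to it. Gesturing at a finite-injury priority scheme does not fix this, and it is also not the right tool: the difficulty is not interference between requirements but that you are examining the wrong set of prefix pairs.

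The missing idea --- and the way the paper's proof actually runs --- is to localize requirement $n$ to the \emph{fresh zone}: at stage $n$, with $A$ decided up to $s_{n-1}$, consider only prefix pairs $(w,u)$ whose $1$-positions all lie beyond $s_{n-1}$ (equivalently, both prefixes begin with $0^{s_{n-1}}$). Two observations then close the argument with no priority machinery at all. First, such a pair can never be ``forced to straddle'': none of its $1$s is committed, so if the $n$-th candidate ever accepts such a pair, you put every one of those positions into $A$, fill the remaining positions up to a new bound $s_n$ into the complement (plus one fresh element of $A$ to keep $A$ infinite), and both prefixes now extend to names for $a$; the candidate thus separates two names of the same point and is refuted permanently. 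Second, if the candidate never accepts any such pair, it is already refuted: since $A$ and its complement are kept infinite, there are a name for $a$ and a name for $b$ both beginning with $0^{s_{n-1}}$ (delete all $1$s below $s_{n-1}$ from the respective characteristic functions); these denote distinct points, and by continuity any acceptance of them would have to occur on finite prefixes whose $1$s all lie beyond $s_{n-1}$ --- exactly the pairs the candidate was assumed never to accept. Note also that the case distinction need not be effective, since $A$ only has to exist; the whole construction is a plain finite-extension argument, one requirement per stage, each met once and forever. Without this localization your middle case genuinely survives, so as written the proof is incomplete.
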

\begin{proof}
A realizer of the Hausdorffness of $D_A$ essentially consists of an effective enumeration of pairs $(w_i,u_i)$ such that seeing names with such prefixes leads to the claim that they refer to distinct points in $D_A$. We will diagonalize against all of them by determining $A \cap \{0,1,\ldots,s_n\}$ for some $s_n \in \mathbb{N}$ in stage $n$. If the $n$-th candidate witness does not include a pair $(w_i,u_i)$ such that $w_i$ and $u_i$ start with $0^{s_{n-1}}$, the witness fails to provide a required answer for some names. We assert $s_{n-1} + 1 \in A$, $s_{n-1} +2 \notin A$ and set $s_n = s_{n-1} + 2$. Otherwise, we pick such a pair $(w_i,u_i)$ and set $k \in A$ for every $k$ such that $w_i(k) = 1$ or $u_i(k) = 1$. For every other $k$ between $s_{n_-1}$  and $s_n := \max \{|u_i|,|w_i|\} + 2$ (exclusive of the bounds) we assert $k \notin A$. We also set $s_n \in A$. This ensures that the $n$-th witness incorrectly asserts that some names both denoting $a$ actually refer to distinct points. Since at any stage we place some numbers into $A$ and some into its complement, we do build an infinite and co-infinite set.
\end{proof}

We can adjust the construction of $D_A$ by partitioning $\mathbb{N} \setminus A$ into finitely or infinitely many sets $B_0,B_1,\ldots$ and correspondingly the point $b$ into $b_0,b_1,\ldots$ where $p$ is a name for $b_i$ if is obtained from the characteristic function of $B_i$ by replacing finitely many $1$s by $0$s. This retains discreteness. If we construct $A$ as in Proposition \ref{prop:twopointnothausdorff}, this yields a discrete countably infinite space without any non-trivial decidable properties.

\subsection{Computably Hausdorff but not computably discrete two-point space}
\label{sec:further_results}

The notion of Hausdorffness for represented spaces does not line up precisely with the topological definition. If a represented space is Hausdorff, the corresponding topological space is only sequentially Hausdorff (this difference does not appear for countably-based spaces). A stronger notion of effective Hausdorffness that also implies topological
Hausdorffness was proposed by Schr\"oder in~\cite[Definition 1]{schrodercca21}:

\begin{definition}
We say $\mathbf{X}$ admits a computable witness of Hausdorffness if there
are computable sequences $(U_i)_{i \in \mathbb{N}}, (V_i)_{i \in \mathbb{N}} \in \mathcal{O}(\mathbf{X})^\mathbb{N}$
of opens such that $\bigcup_{i \in \mathbb{N}} U_i \times V_i = \mathbf{X}^2 \setminus \Delta_{\mathbf{X}}$. 
\end{definition}

At the Oberwolfach Meeting 2117 ``Computability Theory'' in 2021, Brattka raised the question whether a space which is (computably) Hausdorff and topologically Hausdorff will admit a computable witness of Hausdorffness. We provide a counterexample in Corollary \ref{corr:answer}.

For this subsection, by enumerations of sets $A \subseteq \mathbb{N}$, we mean
sequences $s : \mathbb{N} \to \mathbb{N} \uplus \{ \bot \}$ such that
$A = \{ s_n \mid n \in \mathbb{N}, s_n \neq \bot\}$.

\begin{definition}
Fix some non-empty $A \subsetneq \mathbb{N}$.
We define the two-point space $H_A$  with underlying set $\{a,b\}$ as follows:
A name for $a$ starts with some $n \notin A$ followed by an enumeration of $A$.
A name for $b$ starts with some $n \in A$ followed by an enumeration of some
$B \subseteq \mathbb{N} \setminus A$, possibly empty.
\end{definition}

\begin{proposition}
$H_A$ is (computably) Hausdorff.
\end{proposition}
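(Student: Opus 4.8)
The plan is to exhibit a computable realiser of the inequality test on $H_A$, i.e.\ to semidecide whether two given names denote distinct points. Since $H_A$ has only two points $a$ and $b$, being Hausdorff amounts to recognising the pair $(a,b)$ and the pair $(b,a)$ while never accepting $(a,a)$ or $(b,b)$. So I would describe an algorithm that reads two names $p,q$ in parallel and halts precisely when it can certify that they name different points.

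First I would exploit the asymmetry built into the definition: a name for $a$ \emph{begins} with some $n \notin A$, whereas a name for $b$ \emph{begins} with some $n \in A$. However, membership in $A$ is not assumed decidable, so I cannot simply inspect the two leading values and compare them against $A$. The key observation is that the two points can instead be distinguished from the \emph{tails} of their names: after its first entry, a name for $a$ enumerates all of $A$, while a name for $b$ enumerates a subset $B \subseteq \mathbb{N}\setminus A$. Thus the first entry $n_p$ of a name for $a$ lies outside $A$ and hence will \emph{never} appear in the tail of another name for $a$ that enumerates $A$ --- wait, it is the converse that is useful: the leading symbol $n$ of a name for $b$ lies in $A$, and any name for $a$ enumerates $A$ in its tail, so that leading symbol of the $b$-name is guaranteed to eventually show up in the tail of the $a$-name.

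Concretely, given names $p$ and $q$, let $n_p = p(0)$ and $n_q = q(0)$ be their leading entries. I would run two semidecision procedures in parallel. The first searches the tail of $q$ for an occurrence of $n_p$; the second searches the tail of $p$ for an occurrence of $n_q$. I claim that one of these searches succeeds if and only if $p$ and $q$ name distinct points. If $\{\delta(p),\delta(q)\} = \{a,b\}$, say $\delta(p)=a$ and $\delta(q)=b$, then $n_q \in A$ and the tail of $p$ enumerates all of $A$, so the second search finds $n_q$ in the tail of $p$; hence the procedure accepts. Conversely, if $\delta(p)=\delta(q)=a$, then $n_p, n_q \notin A$ while both tails enumerate only elements of $A$, so neither leading entry ever appears in the other tail and neither search halts; symmetrically if both name $b$, their leading entries lie in $A$ but the tails enumerate subsets of $\mathbb{N}\setminus A$, so again neither search halts. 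This shows the procedure halts exactly on distinct points, witnessing computable Hausdorffness.

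The main obstacle I anticipate is the mixed-point case $\delta(p)=b$, $\delta(q)=a$: here I must make sure the same parallel search still succeeds, and indeed by symmetry the first search finds $n_p \in A$ in the tail of $q$. A secondary subtlety is handling the enumeration convention with the placeholder $\bot$: the searches should ignore entries equal to $\bot$, and since tails are genuine enumerations (every element of $A$, resp.\ of $B$, appears at some finite stage, interspersed with $\bot$'s), the relevant search is guaranteed to terminate in the distinct-point cases. I would close by remarking that this argument is uniform in the two input names and uses no information about $A$ beyond the structural definition of the two name-classes, so it yields a genuine computable witness of inequality.
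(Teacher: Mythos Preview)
Your proposal is correct and follows essentially the same approach as the paper: both search for the leading entry of one name in the enumerated tail of the other, and accept iff a match is found. Your case analysis is in fact more explicit than the paper's (which contains a small typo, writing ``$m \in V$'' where ``$m \in U$'' is meant), so the argument stands as written.
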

\begin{proof}
Assume we are given two names $\langle n, U\rangle$ and $\langle m, V\rangle$. If we find that $n \in V$ or $m \in V$ holds, then these have to be names for distinct points in $H_A$. Conversely, if they are names for distinct elements, then one of them is a name for $a$, and its corresponding enumerated set will contain the index of the other. As $n \in V$ or $m \in V$ is recognizable, the claim follows.
\end{proof}

Here $H_A$ is also topologically Haudorff as it is easily seen to be classically
homeomorphic to $\mathbf{2}$.

\begin{proposition}
The sets of names for $b \in H_A$ and enumerations of $A$ are Medvedev-equivalent.
\end{proposition}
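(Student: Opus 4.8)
The plan is to prove the two Medvedev reductions separately, regarding both ``the names of $b$'' and ``the enumerations of $A$'' as mass problems, i.e.\ as subsets of $\Baire$. Recall that a Medvedev reduction $P \leq_{\mathrm{M}} Q$ is a single computable functional sending every member of $Q$ to a member of $P$, and that such a functional is permitted to carry a fixed, finite amount of information about $A$ as a hardcoded parameter---even information, such as a single element of $A$ or of its complement, that cannot be computed uniformly from $A$. Keeping this licence in mind is what makes the converse reduction go through.

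For one direction I would exploit the very format of a name: a name of $b$ exhibits an enumeration of $A$ as its designated second component. Reading off that component is a computable functional taking every name of $b$ to an enumeration of $A$, so the enumerations of $A$ Medvedev-reduce to the names of $b$. This direction is essentially a projection along the pairing $\langle\cdot,\cdot\rangle$ and needs no nonuniform data whatsoever.

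For the converse I would start from an arbitrary enumeration $s$ of $A$ and assemble a name of $b$ from it. I reuse $s$ verbatim as the enumeration component of the name, and I produce the remaining leading datum by hardcoding one fixed natural number $n_0$ whose membership status with respect to $A$ is the one demanded by the definition of a name of $b$. Such an $n_0$ exists because $A$ is a nonempty proper subset of $\mathbb{N}$, and committing to one specific value keeps the functional computable. Pairing $n_0$ with $s$ (and respecting the $\bot$-padding convention for enumerations) yields a legal name of $b$, establishing the reduction in this direction and hence the claimed equivalence.

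The one genuinely delicate point---and the step I would be most careful about---is this last appeal to hardcoding. One cannot compute a correct leading datum from $s$ alone, since that would amount to deciding membership in $A$; the reduction is legitimate only because a Medvedev witness is allowed to depend on the fixed set $A$ through finitely much data. Once this is granted, both functionals are routine, and the only residual bookkeeping is to verify that the pairing and enumeration conventions are preserved, which is immediate.
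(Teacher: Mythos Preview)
Your proof rests on a definitional error: you assert that ``a name of $b$ exhibits an enumeration of $A$ as its designated second component,'' but that is the format of a name of $a$, not of $b$. By definition a name of $b$ begins with some $n \in A$ followed by an enumeration of a subset of $\mathbb{N}\setminus A$. Hence neither of your reductions works for $b$: projecting out the second component of a $b$-name yields an enumeration of part of the \emph{complement} of $A$, and in the converse direction pairing a hardcoded $n_0$ with an enumeration $s$ of $A$ cannot be a valid $b$-name because $A \not\subseteq \mathbb{N}\setminus A$.

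In fact the statement read literally, about names of the \emph{point} $b$, is false whenever $A$ is not c.e.: the point $b$ has a computable name (any fixed $n_0 \in A$ followed by an enumeration of $\emptyset$), so its name set is Medvedev-bottom, whereas the enumerations of $A$ are not. The paper's proof and the ensuing corollaries make clear that the intended object is the \emph{open set} $\{b\} \in \mathcal{O}(H_A)$, i.e.\ realizers of $\chi_{\{b\}}$. The paper's argument is then: from an enumeration of $A$ one builds such a realizer by accepting $(n,V)$ once $n$ is seen in $A$; conversely, any realizer for $\{b\}$ must accept $(n,\emptyset)$ exactly when $n\in A$, since for $n\notin A$ every finite prefix of $(n,\emptyset)$ extends to a name of $a$. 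What you have actually written is a correct proof that names of the point $a$ are Medvedev-equivalent to enumerations of $A$---a true and closely related fact---but it neither matches the stated proposition nor the paper's proof.
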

\begin{proof}
Clearly, if we have an enumeration of $A$ we build a name for $\{b\} \subseteq H_A$
by accepting $(n,V)$ whenever we notice $n \in A$.
Conversely, any realizer for $\{b\} \in \mathcal{O}(H_A)$ must accept
$(n,\emptyset)$ if and only if $n \in A$; this is because a prefix of the enumeration of
$\emptyset$ can always be extended to an enumeration of $A$, and thus a valid name of $a$
in case that $n \notin A$.
\end{proof}

\begin{corollary}
\label{corr:answer}
For non-c.e.\ $A \subseteq \mathbb{N}$, the space $H_A$ is topologically Hausdorff and (computably) Hausdorff but admits no witness of computable Hausdorffness.
\end{corollary}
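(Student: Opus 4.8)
The plan is to dispatch the first two assertions quickly and then concentrate on the failure of computable witnesses. That $H_A$ is (computably) Hausdorff was shown in the proposition above, and topological Hausdorffness was already noted, since $H_A$ is classically homeomorphic to $\mathbf{2}$. So it only remains to show that, for non-c.e.\ $A$, no computable witness of Hausdorffness exists. I argue by contradiction: assuming computable sequences $(U_i)_i,(V_i)_i$ with $\bigcup_i U_i\times V_i = H_A^2\setminus\Delta_{H_A}$, I will manufacture a computable enumeration of $A$, contradicting the hypothesis on $A$.

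The key observation is combinatorial. Since $H_A=\{a,b\}$, the off-diagonal is $H_A^2\setminus\Delta_{H_A}=\{(a,b),(b,a)\}$, which is not a rectangle. Each $U_i\times V_i$ must be contained in this set, and running through the nonempty open subsets of $\{a,b\}$ one checks that the only pairs with $U_i\times V_i\neq\emptyset$ are $(U_i,V_i)=(\{a\},\{b\})$ and $(U_i,V_i)=(\{b\},\{a\})$; in particular $a\in V_i$ forces $a\notin U_i$. Since $(b,a)$ lies in the union, some index $j$ satisfies $b\in U_j$ and $a\in V_j$, and the observation then pins down $U_j=\{b\}$ exactly.

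From here I finish by reducing to the Medvedev-equivalence established in the preceding proposition. Because the witness is computable, a name of $U_j$ — hence a genuine realiser of the open set $\{b\}\in\mathcal{O}(H_A)$ — can be computed from the single number $j$. Feeding this realiser to the reduction of the preceding proposition yields a computable enumeration of $A$, the desired contradiction. Concretely, the reduction runs the realiser on the names $(n,\emptyset)$ and accepts exactly when $n\in A$.

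The step that deserves the most care is the passage from the abstract witness to the concrete open set $\{b\}$, where two points are easy to get wrong. First, we cannot locate the good index $j$ effectively; but we do not need to, since its mere existence lets us hard-wire $j$ as a parameter, and an enumeration relative to a fixed number is still an enumeration — so $A$ comes out c.e. Second, it is essential that $U_j$ \emph{equals} $\{b\}$ and not merely contains $b$: were $a\in U_j$, its realiser would accept the point $a$ and so fail to realise $\{b\}$, and the reduction would not apply. This is exactly what the rectangle analysis guarantees via $a\in V_j\Rightarrow a\notin U_j$. The underlying reason the realiser cannot accept $(n,\emptyset)$ for $n\notin A$ — that every finite prefix of $(n,\emptyset)$ extends to a valid name of $a$ — is the same prefix-extension phenomenon already exploited in the proof of the preceding proposition, which is why invoking it here is legitimate.
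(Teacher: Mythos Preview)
Your argument is correct and follows essentially the same route as the paper's: both reduce the existence of a computable witness of Hausdorffness to $\{b\}$ being computably open, and then invoke the preceding Medvedev-equivalence proposition to derive that $A$ is c.e. The only difference is presentational: the paper dispatches the first step with the one-line observation that for a \emph{finite} space a computable witness of Hausdorffness exists iff every singleton is computably open, whereas you unpack this explicitly for the two-point case via the rectangle analysis (which is exactly the $n=2$ instance of that observation, obtained by intersecting the relevant $U_i$'s).
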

\begin{proof}
From the definition, it is clear that a finite space admits a computable witness
of Hausdorffness if and only if every singleton is computably open.
But $\{b\} \in \mathcal{O}(H_A)$ is not computably open if $A \subseteq \mathbb{N}$ is not computably enumerable.
\end{proof}

\begin{corollary}
For non-c.e.\ $A \subseteq \mathbb{N}$ the space $H_A$ is finite, (computably) Hausdorff but not (computably) discrete.
\end{corollary}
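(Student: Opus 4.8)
The plan is to deduce this corollary directly from the work already done on the space $H_A$, treating it as an immediate consequence of the preceding propositions and corollary rather than a fresh construction. First I would observe that $H_A$ is finite by definition, since its underlying set is the two-element set $\{a,b\}$, and that it is (computably) Hausdorff by the proposition already established for all non-empty $A \subsetneq \mathbb{N}$. Thus the only genuine content is the failure of (computable) discreteness for non-c.e.\ $A$, and the strategy is to reduce discreteness to the computable openness of singletons, then invoke the Medvedev-equivalence result.

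The key step is to recall that for a \emph{finite} space, being discrete is equivalent to every singleton being computably open. This is because discreteness means the diagonal $\Delta_{H_A}$ is computably open in $H_A \times H_A$; for a two-point space, deciding equality amounts to deciding, given a point, which of the finitely many singletons it belongs to, and semideciding membership in $\{b\}$ (resp.\ $\{a\}$) is exactly semideciding whether the point equals $b$ (resp.\ $a$). So $H_A$ is discrete if and only if both $\{a\}$ and $\{b\}$ are computably open. I would then apply the proposition stating that the names for $b$ and the enumerations of $A$ are Medvedev-equivalent: computing $\{b\} \in \mathcal{O}(H_A)$ is equivalent to having an enumeration of $A$. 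Since $A$ is assumed non-c.e., no such enumeration is computable, so $\{b\}$ is not computably open, and hence $H_A$ is not (computably) discrete.

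The main obstacle, such as it is, lies in being careful about the reduction from discreteness to openness of singletons and matching it to the Medvedev statement, which was phrased in terms of $\{b\}$ specifically. Concretely, I would note that the proof of Corollary~\ref{corr:answer} already made exactly this observation (``a finite space admits a computable witness of Hausdorffness if and only if every singleton is computably open, but $\{b\}$ is not computably open''), so the cleanest route is to point out that non-discreteness and the absence of a computable witness of Hausdorffness coincide here for the same underlying reason: both fail precisely because $\{b\}$ is not computably open when $A$ is not c.e. Hence the present corollary is essentially a restatement of Corollary~\ref{corr:answer} under the discreteness terminology, and the proof can be made very short by citing that the non-computable-openness of $\{b\}$ established there directly contradicts discreteness.
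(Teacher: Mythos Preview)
Your approach is essentially the paper's: reduce non-discreteness to the failure of $\{b\}$ to be computably open, then invoke the Medvedev-equivalence proposition. There is one small imprecision worth fixing. Your biconditional ``for a finite space, being discrete is equivalent to every singleton being computably open'' is stronger than what you justify or need; the forward implication (discrete $\Rightarrow$ $\{z\}$ computably open) comes from currying $\mathrm{isEqual}(\cdot,z)$ and therefore requires a computable name for $z$. The paper makes exactly this point: in a discrete space, $\{x\}$ is computably open for every \emph{computable} point $x$, and then observes that $b \in H_A$ is computable (hard-code some $n \in A$ and follow it with the empty enumeration). Since $a$ is not computable when $A$ is non-c.e., your biconditional as stated is not obviously true for $\{a\}$, though this does not affect the conclusion. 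Tightening your argument to ``$b$ is computable, hence discreteness would force $\{b\}$ to be computably open, contradiction'' matches the paper verbatim.
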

\begin{proof}
In a discrete space $\mathbf{X}$, the singleton $\{x\}$ needs to be computably open for every computable point $x \in \mathbf{X}$. The point $b
\in H_A$ is computable, but here $\{b\}$ is not computably open. Thus, $H_A$ cannot be computably discrete.
\end{proof}


\begin{corollary}
If $A$ is not c.e., then $H_A$ cannot be both overt and admissible.
\end{corollary}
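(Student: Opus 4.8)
The plan is to prove the contrapositive: assuming $H_A$ is both overt and admissible, I will show that $A$ is c.e. Since by definition every name of the point $a$ consists of some $n \notin A$ followed by an enumeration of $A$, it suffices to prove that $a$ is a computable point, for then reading off the tail of a computable name of $a$ yields a computable enumeration of $A$. The point $b$ is already computable, so the whole difficulty is concentrated in $a$.

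To compute a name of $a$ I would route through its neighbourhood filter. Let $\Phi$ be a realiser witnessing admissibility, so that $\Phi$ maps any name of $\mathcal{N}_x = \{U \in \mathcal{O}(H_A) \mid x \in U\}$ to a name of $x$. It therefore suffices to exhibit $\mathcal{N}_a$ as a \emph{computable} point of $\mathcal{O}(\mathcal{O}(H_A))$; applying $\Phi$ to it then produces a computable name of $a$. Concretely, I need to semidecide ``$a \in U$'' uniformly from a name of an open set $U$.

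The key step — and the main obstacle — is to notice that, although $\{a\}$ is not obviously computably open from the first-coordinate description of names, it \emph{is} computably open because $H_A$ is Hausdorff: composing the computably open complement of the diagonal $\Delta_{H_A}$ with the map $x \mapsto (x,b)$ (using that $b$ is computable) shows that $\{x \mid x \neq b\}$ is computably open, and since $H_A$ has exactly two points this set is precisely $\{a\}$. Consequently, from a name of any $U$ I can uniformly compute the open set $U \cap \{a\}$, and — again using that $H_A$ has only two points — one has $U \cap \{a\} \neq \emptyset$ if and only if $a \in U$. Feeding $U \cap \{a\}$ to the overtness realiser thus semidecides $a \in U$, which is exactly what a name of $\mathcal{N}_a$ provides.

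Putting these together, overtness computes $\mathcal{N}_a$ and admissibility converts it into a computable name of $a$; hence $A$ is c.e., contradicting the hypothesis. I expect the only delicate points to be the two uses of two-pointedness: that $\{x \mid x \neq b\} = \{a\}$, and that inhabitation of $U \cap \{a\}$ genuinely detects membership of $a$ rather than of some other point — an identity that would fail verbatim in a larger space such as $\mathbb{R}$. These are precisely the places where the argument must draw on \emph{both} overtness (to test inhabitation) and admissibility (to reconstruct the point), together with the Hausdorffness already established for $H_A$.
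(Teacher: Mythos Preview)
Your proof is correct and follows essentially the same route as the paper. Both arguments hinge on the same two observations: since $b$ is computable and $H_A$ is computably Hausdorff, the singleton $\{a\}$ is computably open; and in an overt admissible space a computably open singleton must contain a computable point (you spell this out via the neighbourhood-filter computation, the paper states it as a principle). From a computable name of $a$ one immediately reads off an enumeration of $A$. Your write-up is simply a more explicit unpacking of the paper's two-sentence proof.
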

\begin{proof}
Note that a Hausdorff finite space can only fail to be
discrete if some of its points are non-computable (in this case, this is
because computing $\{a\} \in H_A$ is as hard as enumerating $A$).
In an admissible and overt space, every computably open
singleton contains a computable point.
\end{proof}

We do not know whether $H_A$ is overt, admissible or neither in general.

\begin{proposition}
If $H_A$ is overt, then $A$ is cototal.
\end{proposition}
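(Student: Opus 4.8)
The plan is to unfold the conclusion as an enumeration reduction: recall that $A$ is cototal exactly when $A \leq_e \overline{A}$, where $\overline{A} = \mathbb{N} \setminus A$, i.e.\ when $A$ can be enumerated from any enumeration of its complement. Overtness of $H_A$ (in the computable sense now in force) means precisely that the predicate $U \mapsto [U \neq \emptyset] \colon \mathcal{O}(H_A) \to \mathbb{S}$ is computable; so I would fix a realizer for it and feed it names of open sets that are built uniformly from an enumeration of $\overline{A}$ and that ``localize'' this non-emptiness test to a membership question about $A$.

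Concretely, I would construct, uniformly from an enumeration of $\overline{A}$ and a parameter $n \in \mathbb{N}$, a name of an open set $U_n \in \mathcal{O}(H_A)$ realized by the following $R_n$: on a name $\langle m, V\rangle$ of a point (so $m \in \mathbb{N}$ and $V$ is an enumeration), $R_n$ outputs $\top$ exactly when \emph{both} $m$ gets enumerated into $\overline{A}$ \emph{and} $n$ appears in the enumeration $V$. Both conditions are semidecidable --- the first using the oracle enumeration of $\overline{A}$, the second from the name itself --- so $R_n$ is a legitimate Sierpi\'nski-valued realizer, and a code for it (Turing index together with the oracle $\overline{A}$) is computable from the enumeration of $\overline{A}$.

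The key verification is that $R_n$ is consistent across all names of each point and that $U_n \neq \emptyset$ iff $n \in A$. On a name of $b$ the first entry $m$ lies in $A$, so the guard ``$m \in \overline{A}$'' never fires and no name of $b$ is ever accepted. On a name of $a$ the first entry lies in $\overline{A}$ (so the guard eventually fires) while $V$ enumerates all of $A$, whence $R_n$ accepts iff $n \in A$. Thus $U_n = \{a\}$ when $n \in A$ and $U_n = \emptyset$ when $n \notin A$; in particular $R_n$ is consistent on the names of both points and $U_n \neq \emptyset$ iff $n \in A$. The guard on $m$ is exactly what makes $R_n$ a genuine point of $\mathcal{O}(H_A)$ uniformly in $n$: without it, a name of $b$ whose $V$ happens to list some $n \in \overline{A}$ would be wrongly accepted when $n \notin A$, destroying consistency.

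Finally, feeding the name of $U_n$ to the overtness realizer semidecides $U_n \neq \emptyset$, hence $n \in A$, uniformly in $n$ and in the enumeration of $\overline{A}$; dovetailing over all $n$ then enumerates $A$ from any enumeration of $\overline{A}$, witnessing $A \leq_e \overline{A}$. I expect the only real obstacle to be the bookkeeping of the third paragraph --- verifying that $R_n$ is an honest point of $\mathcal{O}(H_A)$ and that, since every step uses only positive information (a number entering $\overline{A}$, a number appearing in $V$, the overtness test halting), the whole procedure is a monotone enumeration operator rather than merely a Turing reduction.
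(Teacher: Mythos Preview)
Your proof is correct and follows essentially the same approach as the paper's: the open set $U_n$ you build --- accepting $\langle m,V\rangle$ once $m$ is seen in the enumeration of $\overline{A}$ and $n$ appears in $V$ --- is exactly the set the paper constructs, and the overtness test then semidecides $n\in A$. Your write-up is more detailed (you spell out why the guard on $m$ is needed for extensionality, and make the enumeration-reduction framing explicit), but the argument is the same.
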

\begin{proof}
We need to explain how to obtain an enumeration of $A$ from an enumeration $e$ of $\mathbb{N} \setminus A$.
For each $k \in \mathbb{N}$, we can obtain a name for an open set $U_k$ by accepting a pair $(n, V)$ once we have confirmed $n \notin A$ (thanks to $e$)
and $k \in V$. If $k \in A$, then $V_k = \{a\}$; whereas if $k \notin A$, then $V_k = \emptyset$.
Thus, overtness of $H_A$ will yield an enumeration of $A$.
\end{proof}

\section{Characterizing $\mathbb{N}$ up to computable isomorphism}
\label{sec:N_isomorphism}

It would be very pleasing to have a simple characterization of $\mathbb{N}$ up to isomorphism in a general computable topology setting.
Proposition \ref{prop:compquasipolish} tells us that $\mathbb{N}$ is, up to isomorphism, the only discrete Hausdorff non-compact computably Quasi-Polish space.
We raise the question whether this result can be improved.

\begin{question}
Is $\mathbb{N}$ the only represented space (up to isomorphism) which is discrete, Hausdorff, overt and admissible but not compact?
\end{question}

We have already presented examples showing that none of the criteria in our question can be dropped, with the exception of admissibility. This is covered by the following:

\begin{example}[Discrete, overt, Hausdorff, no onto surjection, not admissible]
  Pick a non-computable $p \in \mathbb{N}^\mathbb{N}$. Let $\delta_{p\mathbb{N}}(0^n1q) = n$ if $p \equiv_T q$, and $q \notin \dom(\delta_{p\mathbb{N}})$ for $q \not\equiv_T p$. The resulting space $p\mathbb{N}$ is discrete, Hausdorff and overt. It is not admissible \eike{TODO? Explain why? Neighbourhood filter of $[1p]$ is computable but no representative is computable?}, and there is no computable surjection $s : \mathbb{N} \to p\mathbb{N}$ (in fact, any function from $\mathbb{N}$ to $p\mathbb{N}$ needs to compute $p$).
\end{example}

To round off the discussion, let us point out that there are even more ways to construct spaces which are classically homeomorphic to $\mathbb{N}$, yet behave very differently in computable topology.
A last example, which is overt but not Hausdorff nor discrete is the following.

\begin{example}
There exists a space $\mathbb{N}'$ with underlying set $\mathbb{N}$ such that $\id : \mathbb{N} \to \mathbb{N}'$ is computable, and $\id : \mathbb{N}' \to \mathbb{N}$ is computable
relative to $\emptyset'$ such that $\emptyset$ and $\mathbb{N}$ are the only computable elements in $\mathcal{O}(\mathbb{N}')$.
\begin{proof}
Let $\mathrm{BB} : \mathbb{N} \to \mathbb{N}$ be the busy beaver function. We define $\delta : \Baire \to \mathbb{N}$ by $\delta(p) =
p(\mathrm{BB}(p(0)))$, and let $\mathbb{N}' = (\mathbb{N},\delta)$. Any constant sequence $n^\omega$ is name for $n$, and having access to $\emptyset'$
lets us compute $\mathrm{BB}$ and thereby decode $\delta$-names. Now let $U \in \mathcal{O}(\mathbb{N}')$ be a non-empty open set, and let $n \in U$. A
computable realizer for $U$ needs to accept any sequence $mn^\omega$. If it accepts any such sequence before having read a prefix of length
$\mathrm{BB}(m)$, then it will accept names for all numbers, i.e.~$U = \mathbb{N}$. But that means if $U \neq \mathbb{N}$, then by counting when a realizer
accepts $mn^\omega$, we obtain an upper bound for $\mathrm{BB}(m)$. This means that any realizer for non-empty $U \neq \mathbb{N}$ computes $\emptyset'$.
\end{proof}
\end{example}

\section*{Acknowledgements}
The authors thank Emmanuel Rauzy and Zhifeng Ye for fruitful discussions.

\bibliographystyle{mbibstyle}
\bibliography{references}

\providecommand{\bysame}{\leavevmode\hbox to3em{\hrulefill}\thinspace}
\providecommand{\MR}{\relax\ifhmode\unskip\space\fi MR }
\providecommand{\MRhref}[2]{%
  \href{http://www.ams.org/mathscinet-getitem?mr=#1}{#2}
}
\providecommand{\href}[2]{#2}
\begin{thebibliography}{10}

\bibitem{ABS2017ceers}
Andrews,  Uri, Badaev,  Serikzhan, and Sorbi,  Andrea, \emph{A survey on
  universal computably enumerable equivalence relations}, Computability and
  complexity, Lecture Notes in Comput. Sci., vol. 10010, Springer, Cham, 2017,
  \doi{10.1007/978-3-319-50062-1\_25}, pp.~418--451. \MR{3629733}

\bibitem{almnss}
Andrews,  Uri, Lempp,  Steffen, Miller,  Joseph~S., Ng,  Keng~Meng, San~Mauro,
  Luca, and Sorbi,  Andrea, \emph{Universal computably enumerable equivalence
  relations}, The Journal of Symbolic Logic \textbf{79} (2014), no.~1, 60--88.

\bibitem{debrecht6}
de~Brecht,  Matthew, \emph{Quasi-{P}olish spaces}, Annals of Pure and Applied
  Logic \textbf{164} (2013), no.~3, 356--381, \doi{10.1016/j.apal.2012.11.001}.
  \MR{3001551}

\bibitem{paulydebrecht4}
de~Brecht,  Matthew, Pauly,  Arno, and Schr\"oder,  Matthias, \emph{Overt
  choice}, Computability \textbf{9} (2020), no.~3-4, 169--191,
  \doi{10.3233/com-190253}. \MR{4133712}

\bibitem{Franklin1965}
Franklin,  S.~P., \emph{Spaces in which sequences suffice}, Fundamenta
  Mathematicae \textbf{57} (1965), 107--115, \doi{10.4064/fm-57-1-107-115}.
  \MR{180954}

\bibitem{GG2001ceers}
Gao,  Su and Gerdes,  Peter, \emph{Computably enumerable equivalence
  relations}, Studia Logica \textbf{67} (2001), no.~1, 27--59,
  \doi{10.1023/A:1010521410739}. \MR{1833652}

\bibitem{stull}
Hoyrup,  Mathieu, Rojas,  Crist\'obal, Selivanov,  Victor, and Stull,
  Donald~M., \emph{Computability on quasi-{P}olish spaces}, Descriptional
  complexity of formal systems, Lecture Notes in Comput. Sci., vol. 11612,
  Springer, Cham, 2019, \doi{10.1007/978-3-030-23247-4\_13}, pp.~171--183.
  \MR{3989092}

\bibitem{discreteness-cie}
Neumann,  Eike, Pauly,  Arno, Pradic,  Cécilia, and Valenti,  Manlio,
  \emph{Computably discrete represented spaces}, Crossroads of Computability
  and Logic: Insights, Inspirations, and Innovations (Arnold~Beckmann,
  Florin~Manea, Isabel~Oitavem, ed.), LNCS, Springer, 2025, to appear.

\bibitem{pauly-synthetic}
Pauly,  Arno, \emph{On the topological aspects of the theory of represented
  spaces}, Computability \textbf{5} (2016), no.~2, 159--180,
  \doi{10.3233/COM-150049}. \MR{3602282}

\bibitem{schrodercca21}
Schr\"oder,  Matthias, \emph{Computable Multifunctions on Effectively
  {H}ausdorff Spaces}, available at
  \url{http://cca-net.de/cca2021/abstracts/Schroeder.pdf}, 2021.

\bibitem{Soare16}
Soare,  Robert~I., \emph{Turing Computability: Theory and Applications}, 1 ed.,
  Theory and Applications of Computability, Springer-Verlag, Berlin, 2016,
  \doi{10.1007/978-3-642-31933-4}. \MR{3496974}

\bibitem{weihrauchm}
Weihrauch,  Klaus, \emph{Computably regular topological spaces}, Logical
  Methods in Computer Science \textbf{9} (2013), no.~3, 3:5, 24,
  \doi{10.2168/LMCS-9(3:5)2013}. \MR{3103837}

\end{thebibliography}

\end{document}